\documentclass[11pt]{article}
\usepackage{amsmath,amsfonts, amssymb,graphicx,geometry,authblk,xcolor,subcaption} 
\usepackage[normalem]{ulem}
\usepackage{hyperref}

\geometry{margin=1in}

\usepackage{amsthm}
\usepackage{hyperref}
\usepackage{placeins}

\newtheorem{remark}{Remark}
\newtheorem{theorem}{Theorem}[section]
\newtheorem{lemma}[theorem]{Lemma}

\newcommand{\norm}[1]{\left\lVert#1\right\rVert}

\graphicspath{{./figures/}}
\title{Optimal Design of Responsive Structures}

\title{Optimal Design of Responsive Structures}
\author[1]{Andrew Akerson}
\author[2]{Blaise Bourdin}
\author[1]{Kaushik Bhattacharya}

\affil[1]{California Institute of Technology, Pasadena CA, USA}
\affil[2]{Department of Mathematics, Louisiana State University, Baton Rouge LA, USA. Now Department of Mathematics \& Statistics, McMaster University, Hamilton ON, Canada }
\date{}

\begin{document}

\maketitle

\begin{abstract}
With recent advances in both responsive materials and fabrication techniques it is now possible to construct integrated functional structures, composed of both structural and active materials. We investigate the robust design of such structures through topology optimization. By applying a typical interpolation scheme and filtering technique, we prove existence of an optimal design to a class of objective functions which depend on the compliances of the stimulated and unstimulated states. In particular, we consider the actuation work and the blocking load as objectives, both of which may be written in terms of compliances. We study numerical results for the design of a 2D rectangular lifting actuator for both of these objectives, and discuss some intuition behind the features of the converged designs. We formulate the optimal design of these integrated responsive structures with the introduction of voids or holes in the domain, and show that our existence result holds in this setting. We again consider the design of the 2D lifting actuator now with voids. Finally, we investigate the optimal design of an integrated 3D torsional actuator for maximum blocking torque. 

\end{abstract}

\section{Introduction}
Recent advances in active or responsive materials, approaches to synthesis and fabrication, and significant applications ranging from soft robotics, wearable and prosthetic devices, microfluidics, etc.\ have led to the development of various integrated functional materials and devices (E.g.~\cite{retatl_sms_15,petal_scirep_18,yw_icra_19}). These devices integrate responsive or active materials such as shape-memory alloys, piezoelectrics, dielectric elastomers and liquid crystal elastomers with structural polymers and metals. Further, there have been rapid strides in advancing 3D printing and other synthesis technologies for responsive or active materials~\cite{aetal_acsmi_17,ketal_advmat_19,twr_ami_19,netal_sms_19,eetal_pms_16}, and in combining them with structural components to build integrated functional materials and structures~\cite{rm_aem_19}.  As the complexity and fidelity of the function, and consequently the complexity of the devices increase, it is important to develop a systematic design methodology.

Topology optimization has proven to be an extremely powerful tool in structural applications~\cite{Bendsoe-Sigmund-2003}.   The naive formulation of the classical minimum compliance problem is ill-posed (e.g.~\cite{Cherkaev-2000,Allaire-2002}).  However, it can be relaxed for example using the homogenization method~\cite{Allaire-Bonnetier-EtAl-1997,Allaire-2002} or regularized using perimeter penalization~\cite{Ambrosio-Buttazzo-1993} or phase-field approach~\cite{COCV_2003__9__19_0}.  In particular, the ``simple isotropic material with penalization (SIMP)'' interpolation in conjunction with a filter is known to be well-posed and has proven to be  extremely effective in practice~\cite{Bendsoe-Sigmund-2003}.   While topology optimization led to many real-life applications, the designs were typically complex, and manufacturing optimal designs remained a challenge.  The advent of 3D printing and similar net-shape fabrication techniques have greatly addressed these challenges and given new impetus to optimal design.  In particular,  conceptual links have been established between the multi-scale nature of topology optimization and the idea of tiling, and regularization to a method of incorporating manufacturing constraints.  These have established a pathway to 3D print (almost) optimal structures (e.g.,~\cite{petal_acmgraph_15,setal_siggraph_15,getal_Ijnme_20}).

The optimal design of structural actuators has been studied in a number of works. The design of thermomechanical actuators was originally considered by Rodrigues and Fernandes~\cite{Rodrigues1995} for 2D linear elastic solids undergoing thermal expansion. These ideas were later extended to the design of multiphysics actuators using  topology optimization methods by Sigmund~\cite{Sigmund2001,Sigmund2001a}  for the application of micro-electrical mechanical systems (MEMS). 

Since then, various researchers have considered optimal design of diverse structural actuators including soft piezoelectric microgrippers~\cite{Ruiz2018}, magnetic actuators~\cite{Park2012}, and electro-fluid-thermal compliant actuators~\cite{Yoon2012}. In these studies, the actuator is characterized by three primary objectives. The first is the actuation work which is equivalent to the flexibility or displacement on actuation. The second is the blocking load, or the applied load that can nullify the actuation. The third is the ``workpiece'' objective, which balances  flexibility and stiffness of the structure using a spring attached to a point of interest.

In this work, we provide a mathematical framework to explore the use of topology optimization for the design of integrated responsive structures. Specifically, we consider an actuating structure composed of both an active material which can deform or change modulus in response to a stimulus, and a passive material. We formulate the design of these structures as an optimization problem for a general class of objective functions which are dependent on the compliances in the passive (unstimulated) and active (stimulated) states. Modified with a generalized SIMP interpolation and density filter, we prove existence of an optimal design. It can be shown that all three of the objectives described in the previous discussion can be written as functions of compliance, and thus satisfy the requirements for existence. In particular, we study the actuation work and blocking load objectives. The first is the difference in compliances while we show that the second is equivalent to the ratio of compliances. We provide numerical examples for both of these objectives, and discuss intuition behind the converged designs.

We begin in Section~\ref{sec:back} by reviewing the ill-posed minimum compliance problem, and recall how a SIMP interpolation and filtering technique may lead to a well-posed problem. In Section~\ref{sec:gen_obejctive} we introduce the energy functional for the responsive structure, and formulate an optimal design problem in which the objective is dependent on the compliances of both the stimulated and un-stimulated structure. By regularizing with a SIMP interpolation and density filter, we then prove existence of solutions to this optimal design problem. We continue in Section~\ref{sec:objectives} where we discuss objective functions used to characterize actuating systems, namely a generalized ``workpiece'' objective, the actuation work, and the blocking load. We show that all of these can be written in terms of compliances and thus satisfy the requirements for our existence result. Additionally, we show that the latter two appear as limiting cases of the generalized ``workpiece'' objective. In Section~\ref{sec:examples} we consider numerical examples of actuating structures. We begin in Section~\ref{sec:work} with the actuation work objective and consider the 2D design of bimorph lifting actuators for varying elastic moduli ratios of passive and responsive material, volume fractions of active material, and domain aspect ratios. Next, in Section~\ref{sec:blocking}, we consider the design of identical actuator setups now optimized for the blocking load. In Section~\ref{sec:voids} we consider the introduction of holes or voids in the domain, and show that the existence result continues to hold for the blocking load objective. We consider the 2D design of a lifting actuator with voids in Section~\ref{sec:voids2D}. Then, in Section~\ref{sec:voids3D}, we demonstrate the formulation in a 3D setting by considering the optimal design of a torsional actuator. Finally, in Section~\ref{sec:con}, we discuss challenges and directions for further studies.

\section{Background: Compliance optimization} \label{sec:back}


We briefly recall the classical minimum compliance problem~\cite{Bendsoe-Sigmund-2003}. Consider a bounded domain $\Omega\subset \mathbb{R}^n$, $n=2,3$, subject to a known traction $f$ on a part $\partial_f\Omega \subset \partial \Omega$ of its boundary, and prescribed displacement $u_0$ on $\partial_u\Omega \subset \partial \Omega$. The domain is partitioned into two regions, $D_1$ and $D_2$, occupied respectively by two known linear elastic materials of moduli $\mathbb{C}_1$ and $\mathbb{C}_2$. We seek the arrangement of the regions $D_1$ and $D_2$ that minimize the compliance,
\begin{equation}
	\label{eq:minCompliance}
	\inf_{(D_1,D_2)\in \mathcal{C}_A} \mathcal{C}(D_1,D_2):=\int_{\partial_f \Omega} f\cdot u\, dS,
\end{equation}
where $\mathcal{C}_A$ is a set of admissible designs subject to inclusions and volume fraction constraints, and $u$ the equilibrium displacement solution of a linearized elasticity problem. 
%
%
It is well known that problem~\eqref{eq:minCompliance} is ill-posed: its solution consists of fine mixtures of regions $D_1$ and $D_2$ instead of a ``conventional'' design.
In the {homogenization approach}~\cite{Allaire-Bonnetier-EtAl-1997,Allaire-2002}, the design variables and the state equation are reformulated in terms of the so-called $G$-closure or the set of all Hooke's laws achievable by mixtures of materials $D_1$ and $D_2$.
In this sense, the design of optimal structures reduces to that of optimal metamaterials.  Metamaterials with optimal properties have been constructed explicitly~\cite{Francfort-Murat-1986,Milton-1986a,Cherkaev-2000,Milton-2002a,Allaire-2002} for well-ordered non-degenerate materials. However, optimal metamaterials are not known explicitly in general.
Further, homogenization-based approaches do not always lead to manufacturable designs even with additive manufacturing, since optimality generally {\it requires multiple length-scales}~\cite{Francfort-Murat-1986,Milton-1986a,Allaire-Aubry-1999a,Bourdin-Kohn-2007}, which can make manufacturing optimal structures challenging.


The issue of manufacturability can be tackled by enforcing geometric constraints~\cite{Allaire-Bogosel-2018a,Qian_2017,Wang_2017,Allaire_2017,Allaire-Dapogny-EtAl-2017a,Allaire-Jouve-EtAl-2016a}, or by seeking near-optimal designs with reduced complexity.
The SIMP approach~\cite{Bendsoe-Sigmund-1999} relaxes the problem to ``grey-scale'' designs through a material density $\phi$ taking values in $[0,1]$. The effective Hooke's law is then interpolated to $\mathbb{C} = \phi^p\mathbb{C}_1+ (1-\phi^p)\mathbb{C}_2$, $p>1$.  
In some cases, this is equivalent to the homogenization approach within a family of sub-optimal micro-geometries~\cite{Bendsoe-Sigmund-1999}.  In any case, this approach does not lead to a well-posed problem.  Practically, a SIMP-based implementation alone suffers from mesh dependencies (the smallest feature detected depends on the mesh size) and checkerboards (design patterns at the scale of the finite element mesh that are poorly approximated by low order finite elements). Thus, they are commonly used in conjunction with a filtering technique~\cite{Sigmund-1997}, where either the sensitivities or densities are averaged during optimization, which has been shown to lead to a well-posed problem~\cite{Bourdin-2001}. 
Roughly speaking, introducing a non-local term in the response function (the filter) provides compactness of minimizing sequences of designs which combined with the lower semi-continuity of the objective function, is sufficient to prove existence of ``classical'' solutions. We will borrow these ideas for the optimal design of responsive structures to formulate a well-posed problem.

\section{Optimizing responsive structures} \label{sec:gen_obejctive}

\subsection{Responsive material }

A responsive material is one that changes its shape and/or stiffness in response to a stimulus.  These could include activated, ferroelectric, and magnetostrictive materials.  The corresponding elastic energy density may be described as
\begin{equation}
W(\varepsilon, S) := \frac{1}{2} (\varepsilon(u) - \varepsilon^*(S) ) \cdot {\mathbb C} (S) (\varepsilon(u) - \varepsilon^*(S)),
\end{equation}
where $\varepsilon(u) = (\nabla u + \nabla u^T)/2$ is the linearized strain, $S \in [0, 1]$ the stimulus (assumed here to be a scalar), $\varepsilon^*(S)$ is the stimulus-dependent actuation or spontaneous strain, and $\mathbb C(S)$ is the  possibly stimulus-dependent elastic modulus.  We assume that $\varepsilon^*(0)=0$.

\subsection{Optimal design}

Consider an integrated functional structure occupying a bounded region $\Omega \subset \mathbb{R}^n$ of volume $V$, consisting of a structural material and a responsive material.  Let $\chi_s, \chi_r : \Omega \mapsto \{0, 1\}$ be the characteristic functions of the regions the structural and responsive materials occupy. $\Phi := (\chi_s, \chi_r)$ then describes the design. The structural material may either be a stiff framework or a soft binder.  In this section, we assume that there are no voids, i.e., $\chi_s + \chi_r = 1$.

Subject to a traction  $f \in L^2(\partial_f \Omega)$ on  $\partial_f\Omega$ and displacement $u_0$ on $\partial_u\Omega$, the energy function describing this structure for a given displacement field $u$ under stimulus $S \in [0, 1]$  is
\begin{equation}
\label{eq:ElastEnergy}
\mathcal{E} (\Phi, u, S):= \int_\Omega\frac{1}{2} \left[ \chi_s\varepsilon(u) \cdot \mathbb C_s \varepsilon(u) 
+ \chi_r (\varepsilon(u) - \varepsilon^*) \cdot \mathbb C_r(S)  (\varepsilon(u) - \varepsilon^*) \right]\, dx - \int_{\partial_f \Omega}f\cdot u\, ds,
\end{equation}
where $\mathbb{C}_s$ and $\mathbb{C}_r$ denote respectively the Hooke's laws of the structural and responsive materials. We assume that these Hooke's laws are non-degenerate in the sense that there exist 4 positive constants $0< m_s,m_r,M_s,M_r$ such that 
\begin{equation}
	\label{eq:NonDegenerateHookesLaws}
	m_{r,s} \le \mathbb{C}_{r,s}\eta\cdot\eta\, \le M_{r,s}
\end{equation}
for any symmetric $n$-dimensional second order tensor $\eta$ with unit norm. The compliance of a design is
\begin{equation}
	\label{eq:defCompliance}
	\mathcal{C}(\Phi,S):= \int_{\partial_f \Omega}f\cdot u \, ds,
\end{equation}
where $u$ is the displacement given by the equilibrium condition
\begin{equation} \label{eq:u_min}
u = \text{arg}\min_{u \in \mathcal{U}} \  \mathcal{E}(\Phi, u, S),
\end{equation} 
where
\begin{equation}
\mathcal{U} := \{u \in W^{1, 2}(\Omega) : u = u_0 \text{ on } \partial_u \Omega \}.
\end{equation}
Equivalently, $u$ satisfies
\begin{equation}
\label{eq:responsiveEquilibrium}
G(\Phi,S;u):= -\mathrm{div} \left[\chi_s(x)\mathbb{C}_s\varepsilon(u) + \chi_r(x)\mathbb{C}_r(\varepsilon(u) -  \varepsilon^*(S) )\right] = 0,
\end{equation}
subject to the boundary conditions. 
The task is to find the design $\Phi$ that minimizes an objective function, which we assume to be dependent on the compliances of two states with different stimuli. Thus, we consider a class of optimization problems
\begin{equation} \label{eq:optimization_problem}
\inf_{\Phi \in \mathcal{D}} \ \mathcal{O}(\Phi) := \bar{\mathcal{O}}(\mathcal{C}(\Phi, S_1), \mathcal{C}(\Phi, S_2) ),
\end{equation}
where $\bar{\mathcal{O}} : \mathbb{R} \times \mathbb{R} \mapsto \mathbb{R}$ is a given continuous function, amongst the set of allowable designs:
\begin{equation}
{\mathcal D} = \{\Phi: \chi_r + \chi_s = 1 \text{ on } \Omega, \ \int_\Omega \chi_r \ dV \leq \bar{V}_r \}.
\end{equation}
 Here, we have specified the allowable designs such that the materials occupy the whole domain and consider a restriction on the volume of responsive material, where  $\bar{V}_r \leq V$ is the maximum allowed. The above problem is often ill-posed, suffering from the same issues as that of the standard compliance optimization problem in~\eqref{eq:minCompliance}. Thus, we introduce a SIMP interpolation and a filter as discussed in the previous section.

\subsection{Reformulation using interpolation and regularization}

Consider the relaxed energy functional for the responsive structure with a SIMP interpolation of penalty factor $p > 1$, 
\begin{multline}
\mathcal{E}_f(\phi, u, S) := \int_{\Omega} \frac{1}{2}\left[  (1 - (F * \phi)^p) \mathbb{C}_s \varepsilon(u) \cdot \varepsilon(u) + (F * \phi)^p \mathbb{C}_r(S)(\varepsilon(u) - \varepsilon^*(S)) \cdot (\varepsilon(u) - \varepsilon^*(S)) \right] \ dx \\
- \int_{\partial_f \Omega} f \cdot u \ ds,
\end{multline}
where $F$ is the filter function of characteristic length $R_f > 0$ satisfying
\begin{equation}
\begin{aligned}
& F \in  W^{1, 2}(\mathbb{R}^n), \\ 
& \text{Supp}(F) \subset B_{R_f}, \\
& F \geq 0 \ \text{ a.e. in } \, B_{R_f}, \\
& \int_{B_{R_f}} F \ dx = 1,
\end{aligned}
\end{equation}
where $B_{R_f}$ denotes the open ball of radius $R_f$ of $\mathbb{R}^n$,
and we define the convolution over the bounded region $\Omega$ as
\begin{equation} \label{eq:conv}
(F*\phi) (x) := \int_{\Omega} F(x - y) \phi(y) \ dy.
\end{equation}
The field $\phi: \Omega \mapsto [0, 1]$ describes the topology, with regions of $\phi = 0$ and $\phi = 1$ corresponding to passive and active material, respectively. We assume the transformation strain $\varepsilon^*(S) \in L^2(\Omega)$. Notice that since the integral in \eqref{eq:conv} is over $\Omega$, the filtered density near the boundary will not be able to take values near $1$. In practice, we renormalize the convolution following the lines of~\cite{Bourdin-2001} to avoid such boundary effects, as discussed in Section~\ref{sec:examples}. The compliance of a design is, again, 
\begin{equation}
\label{eq:defCompliance_phi}
\mathcal{C}(\phi,S):= \int_{\partial_f \Omega}f\cdot u \, ds,
\end{equation}
where $u$ is the displacement associated with the design $\Phi$ and stimulus $S$ minimizing $\mathcal{E}_f(\phi, u, S)$.
It should be noted that under the assumptions on $\mathbb{C}_s$ and $\mathbb{C}_r$ in~\eqref{eq:NonDegenerateHookesLaws}, $u$ is the unique solution of the Euler-Lagrange equations.
\begin{equation} \label{eq:u_min_f} 
\mathcal{Q}_f(\phi, u, v,  S) = 0 \quad  \forall \ v \in \mathcal{U}_0,
\end{equation}
with
\begin{multline} \label{eq:weak_eq}
\mathcal{Q}_f(\phi, u, v,  S) := \int_{\Omega} \left[  (1 - (F * \phi)^p) \mathbb{C}_s \varepsilon(u) \cdot \varepsilon(v) + (F * \phi)^p \mathbb{C}_r(S) (\varepsilon(u) - \varepsilon^*(S)) \cdot (\varepsilon(v)) \right] \ dx \\
- \int_{\partial_f \Omega} f \cdot v \ ds,
\end{multline}  
and
\begin{equation}
\mathcal{U}_0 := \{u \in W^{1, 2}(\Omega) : u = 0 \text{ on } \partial_u \Omega \}.
\end{equation}
 We again consider the class of optimization problems
\begin{equation} \label{eq:optimization_problem_filter}
\inf_{\phi \in \mathcal{D}_f} \ \mathcal{O}(\phi) := \bar{\mathcal{O}}(\mathcal{C}(\phi, S_1), \mathcal{C}(\phi, S_2) ),
\end{equation}
where we optimize over the set
\begin{equation}
\mathcal{D}_f := \left \{ \phi: \phi(x) \in [0, 1] \ \text{ a.e. on } \Omega, \int_{\Omega} \phi \ dx \leq \bar{V}_r \right \}.
\end{equation}
\begin{remark} \label{ellipticity_remark}
	Ellipticity: From the definition of $\mathcal{D}$, the properties of the Hooke's laws~\eqref{eq:NonDegenerateHookesLaws}, and the properties of the filter, there exists a constant $m > 0$, only depending on $\Omega$ and $S$ such that for any $\phi$ and $u\in\mathcal{U}$ the following holds:
	\begin{equation}
	\int_{\Omega} \frac{1}{2} \left[  (1 - (F * \phi)^p) \mathbb{C}_s \varepsilon(u) \cdot \varepsilon(u) + (F * \phi)^p \mathbb{C}_r(S) \varepsilon(u) \cdot \varepsilon(u) \right] \ dx \geq m \norm{u}^2_{W^{1,2}(\Omega)}.
	\end{equation}
\end{remark}

\subsection{Existence of solutions}

We establish the existence of solutions to~\eqref{eq:optimization_problem} through the following theorem in this section 

\begin{theorem} \label{thm:ext}
	Recall the definition of the compliances from~\eqref{eq:defCompliance_phi}, and set
	\begin{equation}
	\mathcal{O}(\phi) := \bar{\mathcal{O}}(\mathcal{C}(\phi, S_1), \mathcal{C}(\phi, S_2) ),
	\end{equation}
	where $\bar{\mathcal{O}}$ is bounded below and continuous. There exists a $\bar{\phi} \in \mathcal{D}_f$ such that,
	\begin{equation} \label{eq:O_inf}
	\mathcal{O}(\bar{\phi}) = \inf_{\phi \in \mathcal{D}_f } \ \mathcal{O}(\phi).
	\end{equation}
\end{theorem}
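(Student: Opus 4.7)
The plan is a direct-method argument in the calculus of variations, with the filter providing the compactness that makes the nonlinear state equation stable under limits. I would take a minimizing sequence $\{\phi_n\}\subset\mathcal{D}_f$, extract a limit $\bar\phi$, show $\bar\phi\in\mathcal{D}_f$, pass to the limit in the equilibrium equation for each stimulus $S_1,S_2$, and then invoke continuity of $\bar{\mathcal{O}}$. Since $0\le\phi_n\le 1$ a.e., the sequence is bounded in $L^\infty(\Omega)$, so up to a (non-relabelled) subsequence, $\phi_n\rightharpoonup\bar\phi$ weakly-$*$ in $L^\infty(\Omega)$. The pointwise bounds and the linear constraint $\int_\Omega\phi\,dx\le\bar V_r$ are preserved under weak-$*$ limits, giving $\bar\phi\in\mathcal{D}_f$.

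The crucial step is upgrading this weak convergence to strong convergence of the filtered density. Because $F\in W^{1,2}(\mathbb{R}^n)$ with $\mathrm{Supp}(F)\subset B_{R_f}$, Young's inequality bounds $F*\phi_n$ and $\nabla(F*\phi_n)=(\nabla F)*\phi_n$ uniformly. For each $x$, the function $y\mapsto F(x-y)\mathbf{1}_\Omega(y)$ is a fixed $L^2$ test function, so $(F*\phi_n)(x)\to (F*\bar\phi)(x)$ pointwise. The uniform $W^{1,2}(\Omega)$ bound plus Rellich--Kondrachov then yields $F*\phi_n\to F*\bar\phi$ strongly in $L^2(\Omega)$, and dominated convergence gives $(F*\phi_n)^p\to(F*\bar\phi)^p$ in every $L^q(\Omega)$ with $q<\infty$.

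Fix $S\in\{S_1,S_2\}$ and let $u_n=u(\phi_n,S)$ be the unique solution of \eqref{eq:u_min_f}. The ellipticity estimate of Remark~\ref{ellipticity_remark}, combined with a routine test of \eqref{eq:weak_eq} against $u_n-u_0^{\mathrm{ext}}$ (for any fixed extension of $u_0$), yields $\norm{u_n}_{W^{1,2}(\Omega)}\le C$ uniformly in $n$. Extract a further subsequence with $u_n\rightharpoonup\tilde u$ weakly in $W^{1,2}(\Omega)$. In \eqref{eq:weak_eq}, each coefficient $(1-(F*\phi_n)^p)$ and $(F*\phi_n)^p$ converges strongly in $L^q(\Omega)$ for any $q<\infty$, while $\varepsilon(u_n)\rightharpoonup\varepsilon(\tilde u)$ weakly in $L^2(\Omega)$ and $\varepsilon^*(S)\in L^2(\Omega)$ is fixed; the strong--weak product passes to the limit, and $\mathcal{Q}_f(\bar\phi,\tilde u,v,S)=0$ for all $v\in\mathcal{U}_0$. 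By uniqueness of solutions, $\tilde u=u(\bar\phi,S)$, so the whole subsequence converges. Finally, the trace operator $W^{1,2}(\Omega)\to L^2(\partial\Omega)$ is compact, hence $u_n|_{\partial_f\Omega}\to u(\bar\phi,S)|_{\partial_f\Omega}$ strongly in $L^2$, and since $f\in L^2(\partial_f\Omega)$ we conclude $\mathcal{C}(\phi_n,S)\to\mathcal{C}(\bar\phi,S)$. Doing this along a common subsequence for $S_1$ and $S_2$ and using continuity of $\bar{\mathcal{O}}$ gives $\mathcal{O}(\phi_n)\to\mathcal{O}(\bar\phi)$, and $\bar{\mathcal{O}}$ being bounded below ensures the infimum is finite and attained by $\bar\phi$.

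The main obstacle is exactly the passage to the limit in the state equation: without the filter, one would face the product of two only-weakly-convergent sequences, $(1-\phi_n^p)\mathbb{C}_s\varepsilon(u_n)$, and no amount of lower semicontinuity would rescue it. The mollification by $F$ is what converts weak convergence of $\phi_n$ into strong convergence of $F*\phi_n$, which is precisely the ingredient that makes the rest of the argument routine. A secondary care is needed if $\varepsilon^*(S)$ depends on the design (it does not here), but as stated the transformation strain is a fixed $L^2$ field and presents no difficulty.
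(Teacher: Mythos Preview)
Your argument is correct and shares the paper's overall direct-method skeleton (minimizing sequence, weak compactness of $\phi_n$, the filter upgrading this to strong convergence of $F*\phi_n$, stability of the state map, continuity of the compliance), but the intermediate steps are executed differently. For the state equation you pass to the limit \emph{directly in the weak formulation} $\mathcal{Q}_f=0$, pairing the strongly convergent coefficients $(F*\phi_n)^p$ (bounded in $L^\infty$, strong in every $L^q$) against the weakly convergent strains $\varepsilon(u_n)$; the paper instead argues via \emph{energy minimality} (Lemma~\ref{lem:00}), bounding $\mathcal{E}_f(\phi_k,u_k,S)$ uniformly, extracting a weak limit $u^\infty$, and identifying $u^\infty=\bar u$ through lower semicontinuity of $\mathcal{E}_f(\bar\phi,\cdot,S)$ and uniqueness of its minimizer. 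For the compliance, you invoke compactness of the trace $W^{1,2}(\Omega)\to L^2(\partial\Omega)$, whereas the paper (Lemma~\ref{lem:01}) tests the limiting weak form against $\bar u-u_k$ and reads off $\mathcal{C}(\phi_k,S)\to\mathcal{C}(\bar\phi,S)$ algebraically. Your route is slightly more economical here and also cleaner on the filter step (you only need $L^q$ convergence of $(F*\phi_n)^p$, obtained via Rellich, rather than the uniform convergence the paper asserts from a pointwise argument); the paper's energy-based identification of the limit, on the other hand, would adapt more readily to non-quadratic convex stored energies where a strong--weak product argument in the Euler--Lagrange equation is less immediate.
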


We need two lemmas to prove this theorem.  The first establishes the weak continuity of the solutions to the elliptic problem and the second the weak continuity of the compliance.

\begin{lemma} \label{lem:00}
	Let $\{u_k\} \subset \mathcal{U}$ be the sequence of equilibrium solutions to~\eqref{eq:u_min_f} corresponding to sequence $ \{ \phi_k \} \subset \mathcal{D}_f$ for some fixed $S$. If
	\begin{equation}
	(F * \phi_k)^p \rightarrow (F * \bar{\phi})^p \ \text{uniformly on } \Omega \text{ when } k \rightarrow + \infty
	\end{equation}
	then
	\begin{equation}
	\begin{aligned}
	u_{k} \rightharpoonup \bar{u} \text{ in } W^{1,2}(\Omega) \text{ when } k \rightarrow + \infty, \\
	\end{aligned}
	\end{equation}
	up to a subsequence, where $\bar{u} \in \mathcal{U}$ is the equilibrium configuration corresponding to $\bar{\phi} \in \mathcal{D}_f$.
\end{lemma}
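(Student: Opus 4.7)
The plan is to extract a weakly convergent subsequence from $\{u_k\}$ using a uniform $W^{1,2}$ bound, and then to pass to the limit in the weak Euler--Lagrange equation, exploiting the fact that the uniform convergence of the coefficients $(F*\phi_k)^p$ turns ``weak times bounded measurable'' into an honest ``weak times strong in $L^2$'' product.

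First I would establish the a priori bound $\sup_k \|u_k\|_{W^{1,2}(\Omega)}<\infty$. Fix a $W^{1,2}$ extension $u_0^{\mathrm{ext}}$ of the Dirichlet datum $u_0$, so that $u_k - u_0^{\mathrm{ext}} \in \mathcal{U}_0$ is an admissible test function in~\eqref{eq:u_min_f}. Plugging it into $\mathcal{Q}_f(\phi_k,u_k,\cdot,S)=0$ and rearranging places the full quadratic form $\int[(1-(F*\phi_k)^p)\mathbb{C}_s\varepsilon(u_k)\cdot\varepsilon(u_k) + (F*\phi_k)^p\mathbb{C}_r(S)\varepsilon(u_k)\cdot\varepsilon(u_k)]\,dx$ on the left, bounded below by $2m\|u_k\|_{W^{1,2}}^2$ via Remark~\ref{ellipticity_remark}. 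On the right I get terms that are linear in $\varepsilon(u_k)$, whose coefficients involve the pointwise-bounded quantities $(F*\phi_k)^p\in[0,1]$, the fixed data $\mathbb{C}_s\varepsilon(u_0^{\mathrm{ext}})$, $\mathbb{C}_r(S)\varepsilon^*(S) \in L^2(\Omega)$, and the boundary traction $f\in L^2(\partial_f\Omega)$ together with the trace of $u_k-u_0^{\mathrm{ext}}$. Cauchy--Schwarz, the trace inequality, and Young's inequality let me absorb the $\|u_k\|_{W^{1,2}}$ factors into the left-hand side, yielding a uniform bound independent of $k$.

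By reflexivity of $W^{1,2}(\Omega)$ I extract a (non-relabeled) subsequence with $u_k\rightharpoonup\bar{u}$ in $W^{1,2}(\Omega)$; continuity of the trace operator shows $\bar{u}=u_0$ on $\partial_u\Omega$, so $\bar{u}\in\mathcal{U}$. It remains to identify $\bar{u}$ as the equilibrium displacement for $\bar{\phi}$. Fix $v\in\mathcal{U}_0$ and consider
\begin{equation*}
\mathcal{Q}_f(\phi_k,u_k,v,S)=\int_\Omega\!\bigl[(1-(F*\phi_k)^p)\mathbb{C}_s\varepsilon(u_k) + (F*\phi_k)^p\mathbb{C}_r(S)(\varepsilon(u_k)-\varepsilon^*(S))\bigr]\cdot\varepsilon(v)\,dx - \int_{\partial_f\Omega}\!f\cdot v\,ds = 0.
\end{equation*}
Because $(F*\phi_k)^p\to(F*\bar{\phi})^p$ uniformly and $\mathbb{C}_s\varepsilon(v)$, $\mathbb{C}_r(S)\varepsilon(v)$, and $\mathbb{C}_r(S)\varepsilon^*(S)\cdot\varepsilon(v)$ all lie in $L^2(\Omega)$, the coefficient-times-test-strain factors converge strongly in $L^2(\Omega)$; pairing strong $L^2$ convergence with the weak $L^2$ convergence $\varepsilon(u_k)\rightharpoonup\varepsilon(\bar{u})$ passes the bulk integrals to the limit, while the boundary term is independent of $k$. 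Hence $\mathcal{Q}_f(\bar{\phi},\bar{u},v,S)=0$ for every $v\in\mathcal{U}_0$, and the uniqueness of the equilibrium displacement guaranteed by Remark~\ref{ellipticity_remark} identifies $\bar{u}$ as the equilibrium configuration for $\bar{\phi}$.

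The main technical obstacle is the uniform a priori estimate: the elastic tensor varies with $k$, the Dirichlet data is non-homogeneous, and the spontaneous strain $\varepsilon^*(S)$ produces an inhomogeneous forcing that must be controlled uniformly. The key observation that makes this routine is that Remark~\ref{ellipticity_remark} provides a coercivity constant $m$ depending only on $\Omega$ and $S$ (and not on $\phi$), so standard absorption arguments apply uniformly in $k$. Once the bound is in hand, the limit passage is straightforward because uniform convergence of $(F*\phi_k)^p$ is stronger than what a weak-strong pairing requires.
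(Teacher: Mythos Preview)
Your argument is correct, and it differs from the paper's in a useful way. For the a priori bound you test the weak form with $u_k-u_0^{\mathrm{ext}}$, whereas the paper uses the minimality of $u_k$ to get a uniform upper bound on $\mathcal{E}_f(\phi_k,u_k,S)$ and then expands the energy; both routes land on the same inequality $m\|u_k\|_{W^{1,2}}^2 \le C\|u_k\|_{W^{1,2}}+C'$. The more substantive divergence is in identifying the weak limit. You pass to the limit directly in the Euler--Lagrange equation $\mathcal{Q}_f(\phi_k,u_k,v,S)=0$ for each fixed $v$, using that uniform convergence of $(F*\phi_k)^p$ upgrades the coefficient-times-$\varepsilon(v)$ factor to strong $L^2$ convergence so that it pairs with $\varepsilon(u_k)\rightharpoonup\varepsilon(\bar u)$. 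The paper instead runs an energy comparison: it shows $\limsup_k \mathcal{E}_f(\bar\phi,u_k,S)\le \mathcal{E}_f(\bar\phi,\bar u,S)$ by adding and subtracting $\mathcal{E}_f(\phi_k,\cdot,S)$ terms, then invokes convexity/weak lower semicontinuity and uniqueness of the minimizer. Your approach is shorter and avoids the lower-semicontinuity step entirely; the paper's approach, on the other hand, never writes down the weak form and is closer in spirit to $\Gamma$-convergence arguments, which could be advantageous if one later wanted to treat nonlinear or nonsmooth energies where the Euler--Lagrange equation is less convenient.
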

\begin{proof}
	We will first show by compactness that there exists a $u^{\infty} \in \mathcal{U}$ such that $u_k \rightharpoonup u^{\infty}$ in $W^{1,2}(\Omega)$. Then, we will show that we must have $u^{\infty} = \bar{u}$.
	
	Since $u_k$ is the equillibrium solution corresponding to $\phi_k$ for some fixed $S$, it satisfies
	\begin{equation}
	u_k = \text{arg}\min_{u \in \mathcal{U}} \ \mathcal{E}_f(\phi_k, u, S),
	\end{equation}
	and for any $\tilde{u} \in \mathcal{U}$, we have
	\begin{equation}
	\mathcal{E}_f (\phi_k, u_k, S) \leq \mathcal{E}_f (\phi_k, \tilde{u}, S).
	\end{equation}
	Furthermore, 
	\begin{multline}
	\mathcal{E}_f (\phi_k, \tilde{u}, S) \leq \int_{\Omega} \frac{1}{2} \left[  \mathbb{C}_s \varepsilon(\tilde{u}) \cdot \varepsilon(\tilde{u}) + \mathbb{C}_r(S) (\varepsilon(\tilde{u}) - \varepsilon^*(S)) \cdot (\varepsilon(\tilde{u}) - \varepsilon^*(S)) \right] \ dx \\
	- \int_{\partial_f \Omega} f \cdot \tilde{u} \ ds = M
	\end{multline}
	where $M$ is some constant, independent of $k$. So,
	\begin{equation}
	\mathcal{E}_f (\phi_k, u_k, S) \leq M.
	\end{equation}
	Now, expanding the energy functional
	\begin{equation}
	\begin{aligned}
	\mathcal{E}_f (\phi_k, u_k, S) = &\int_{\Omega} \frac{1}{2} \big[(1 - (F * \phi_k)^p) \mathbb{C}_s \varepsilon(u_k) \cdot \varepsilon(u_k) + (F * \phi_k)^p \mathbb{C}_r(S) \varepsilon(u_k) \cdot \varepsilon(u_k)   \\
	& \qquad + (F * \phi_k)^p \mathbb{C}_r(S)\varepsilon^*(S) \cdot \varepsilon^*(S)  -  2(F * \phi_k)^p \mathbb{C}_r (S) \varepsilon^*(S)  \cdot \varepsilon(u_k) \big] \ dx \\
	- &\int_{\partial_f \Omega} f \cdot u_k \ ds,
	\end{aligned}
	\end{equation}
	and using the ellipticity from Remark \ref{ellipticity_remark}
	\begin{equation}
	\begin{aligned}
	m \norm{u_k}^2_{W^{1,2}(\Omega)} -  \left| \int_{\Omega} (F * \phi_k)^p \mathbb{C}_r (S) \varepsilon^*(S) \cdot \varepsilon(u_k) \ dx \right | - \int_{\partial_f \Omega} f \cdot u_k \ ds &\leq \mathcal{E}_f (\phi_k, u_k, S), \\
	m \norm{u_k}^2_{W^{1,2}(\Omega)} -  \int_{\Omega} (F * \phi_k)^p  \left| \mathbb{C}_r (S) \varepsilon^*(S) \cdot \varepsilon(u_k) \right |  \ dx - \int_{\partial_f \Omega} f \cdot u_k \ ds &\leq \mathcal{E}_f (\phi_k, u_k, S), \\ 
	m \norm{u_k}^2_{W^{1,2}(\Omega)} -  \int_{\Omega} (F * 1)^p  \left| \mathbb{C}_r (S) \varepsilon^*(S) \cdot \varepsilon(u_k) \right |  \ dx - \int_{\partial_f \Omega} f \cdot u_k \ ds &\leq \mathcal{E}_f (\phi_k, u_k, S), \\
	m \norm{u_k}^2_{W^{1,2}(\Omega)} -  \int_{\Omega} \left|\mathbb{C}_r (S) \varepsilon^*(S) \cdot \varepsilon(u_k) \right |  \ dx - \int_{\partial_f \Omega} f \cdot u_k \ ds &\leq \mathcal{E}(\phi_k, u_k, S), \\
	m \norm{u_k}^2_{W^{1,2}(\Omega)} -  \norm{\mathbb{C}_r (S) \varepsilon^*(S)}_{L^2(\Omega)}  \norm{\varepsilon(u_k)}_{L^2(\Omega)} -\int_{\partial_f \Omega} f \cdot u_k \ ds &\leq \mathcal{E}_f (\phi_k, u_k, S), \\ 
	m \norm{u_k}^2_{W^{1,2}(\Omega)} - c \norm{u_k}_{W^{1,2}( \Omega)} - \int_{\partial_f \Omega} f \cdot u_k \ ds &\leq \mathcal{E}_f (\phi_k, u_k, S) 
	\end{aligned}
	\end{equation}
	for some constants $m,c > 0$, independent of $k$. Additionally,
	\begin{equation}
	\begin{aligned}
	m \norm{u_k}^2_{W^{1,2}(\Omega)} - c \norm{u_k}_{W^{1,2}( \Omega)}	- \left | \int_{\partial_f \Omega} f \cdot u_k \ ds \right |  &\leq \mathcal{E}_f (\phi_k, u_k, S), \\
	m \norm{u_k}^2_{W^{1,2}(\Omega)} - c \norm{u_k}_{W^{1,2}( \Omega)}	- \norm{f}_{L^2(\partial_f \Omega)} \norm{u_k}_{L^2(\partial_f \Omega)} &\leq \mathcal{E}(\phi_k, u_k, S), \\
	m \norm{u_k}^2_{W^{1,2}(\Omega)} - c \norm{u_k}_{W^{1,2}( \Omega)} - \norm{f}_{L^2(\partial_f \Omega)} \norm{u_k}_{L^2(\partial \Omega)} &\leq \mathcal{E}_f (\phi_k, u_k, S), \\
	m \norm{u_k}^2_{W^{1,2}(\Omega)} - c \norm{u_k}_{W^{1,2}( \Omega)} - a \norm{u_k}_{W^{1,2}(\Omega)} &\leq \mathcal{E}_f (\phi_k, u_k, S),
	\end{aligned}
	\end{equation}
	for some constant $a > 0$. Then,
	\begin{equation}
	m \norm{u_k}^2_{W^{1,2}(\Omega)} - b \norm{u_k}_{W^{1,2}(\Omega)} \leq M \implies \norm{u_k}_{W^{1,2}(\Omega)} \leq d,
	\end{equation}
	for some constant $b > 0$, where $d > 0$ is a constant independent of $k$. Thus, $u_k$ is a bounded sequence in $W^{1,2}(\Omega)$, and there exists a $u^\infty \in \mathcal{U}$ such that
	\begin{equation}
	u_{k} \rightharpoonup u^{\infty} \text{ in } W^{1,2}(\Omega) \text{ when } k \rightarrow + \infty, \\
	\end{equation}
	up to a subsequence. Next, consider $\bar{u} \in \mathcal{U}$ such that
	\begin{equation}
	\bar{u} = \text{arg}\min_{u \in \mathcal{U}}  \mathcal{E}_f (\bar{\phi}, u, S).
	\end{equation}
	Then
	\begin{equation} \label{eq:u_bar_less_u_infty}
	\mathcal{E}_f (\bar{\phi}, \bar{u}, S) \leq \mathcal{E}_f (\bar{\phi}, u^{\infty}, S).
	\end{equation}
	Similarly,
	\begin{multline}
	\mathcal{E}_f(\phi_k, u_k, S) \leq \mathcal{E}_f(\phi_k, \bar{u}, S) = \mathcal{E}_f(\phi_k, \bar{u}, S) - \mathcal{E}_f(\bar{\phi}, \bar{u}, S) + \mathcal{E}_f(\bar{\phi}, \bar{u}, S) \\
	- \mathcal{E}_f(\bar{\phi}, u_k, S) + \mathcal{E}_f(\bar{\phi}, u_k, S),
	\end{multline}
	or
	\begin{equation}
	\mathcal{E}_f(\bar{\phi}, u_k, S) \leq \mathcal{E}_f(\bar{\phi}, \bar{u}, S) + \mathcal{E}_f(\phi_k, \bar{u}, S) - \mathcal{E}_f(\bar{\phi}, \bar{u}, S)  + \mathcal{E}_f(\bar{\phi}, u_k, S) - \mathcal{E}_f(\phi_k, u_k, S).
	\end{equation}
	Then taking limits, and using the strong convergence of the convolution gives
	\begin{equation}
	\lim_{k \rightarrow \infty} \ \mathcal{E}_f(\bar{\phi}, u_k, S)  \leq \mathcal{E}_f(\bar{\phi}, \bar{u}, S).
	\end{equation}
	The convexity of the energy integrand in $\nabla u$ and $u$ for a given $\phi$ and $S$ gives lower semi-continuity of our energy function
	\begin{equation}
	\mathcal{E}_f(\bar{\phi}, u^{\infty}, S) \leq \lim_{k \rightarrow \infty} \ \mathcal{E}_f(\bar{\phi}, u_k, S),
	\end{equation}
	so
	\begin{equation}
	\mathcal{E}_f(\bar{\phi}, u^{\infty}, S) \leq \mathcal{E}_f(\bar{\phi}, \bar{u}, S).
	\end{equation}
	Then from~\eqref{eq:u_bar_less_u_infty},
	\begin{equation}
	\mathcal{E}_f(\bar{\phi}, u^{\infty}, S) = \mathcal{E}_f(\bar{\phi}, \bar{u}, S).
	\end{equation}
	From the uniqueness of the minimizer of $\mathcal{E}_f(\bar{\phi}, \cdot, S)$ we have
	\begin{equation}
	u^\infty = \bar{u}.
	\end{equation}
	Then, as desired,
	\begin{equation}
	u_{k} \rightharpoonup \bar{u} \text{ in } W^{1,2}(\Omega) \text{ when } k \rightarrow + \infty. \\
	\end{equation}
\end{proof}

\begin{lemma} \label{lem:01}
	Let $\{u_k\} \subset \mathcal{U}$ be the sequence of equilibrium solutions corresponding to sequence $\{ \phi_k \} \subset \mathcal{D}_f$ for some fixed $S$. If
	\begin{equation}
	\begin{aligned}
	u_{k} \rightharpoonup \bar{u} \text{ in } W^{1,2}(\Omega) \text{ when } k \rightarrow + \infty, \\
	\end{aligned}
	\end{equation}
	where $\bar{u} \in \mathcal{U}$ is the equilibrium configuration corresponding to $\bar{\phi} \in \mathcal{D}_f$, then
	\begin{equation}
	\lim_{k \rightarrow \infty} \ \mathcal{C}(\phi_k, S) = \mathcal{C}(\bar{\phi}, S).
	\end{equation}
\end{lemma}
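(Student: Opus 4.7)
The plan is to reduce the claim to a statement about strong convergence of boundary traces. By the definition~\eqref{eq:defCompliance_phi}, the compliance is a boundary integral linear in $u$, so
\begin{equation*}
\mathcal{C}(\phi_k, S) - \mathcal{C}(\bar{\phi}, S) = \int_{\partial_f \Omega} f \cdot (u_k - \bar{u}) \, ds.
\end{equation*}
Applying the Cauchy--Schwarz inequality on $\partial_f \Omega$ bounds the right-hand side by $\norm{f}_{L^2(\partial_f \Omega)} \norm{u_k - \bar{u}}_{L^2(\partial_f \Omega)}$. Since $f \in L^2(\partial_f \Omega)$ is fixed, it remains to show that $u_k \to \bar{u}$ strongly in $L^2(\partial_f \Omega)$.

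For this step I would invoke the compactness of the trace operator. The standard result (a consequence of the Rellich--Kondrachov theorem together with continuity of the trace) is that the trace map $W^{1,2}(\Omega) \to L^2(\partial \Omega)$ is compact on a Lipschitz domain, so every weakly convergent sequence in $W^{1,2}(\Omega)$ is mapped to a strongly convergent sequence in $L^2(\partial\Omega)$. Applying this to the hypothesis $u_k \rightharpoonup \bar{u}$ in $W^{1,2}(\Omega)$ yields $u_k \to \bar{u}$ strongly in $L^2(\partial \Omega)$, hence in $L^2(\partial_f \Omega)$.

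Combining these two observations, $|\mathcal{C}(\phi_k, S) - \mathcal{C}(\bar{\phi}, S)| \le \norm{f}_{L^2(\partial_f \Omega)} \norm{u_k - \bar{u}}_{L^2(\partial_f \Omega)} \to 0$, which is exactly the conclusion. There is no real obstacle here: the argument is essentially a one-line application of the compact trace embedding, with the only (implicit) hypothesis being that $\Omega$ is regular enough (Lipschitz) for the trace theorem to apply, an assumption already built into the setup of the earlier sections. The proof does not require revisiting the Euler--Lagrange equations, the filter structure, or the SIMP interpolation, since all of that work has been absorbed into Lemma~\ref{lem:00}, whose conclusion we simply postcompose with the compact trace map.
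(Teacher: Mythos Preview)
Your argument is correct and, in fact, more direct than the paper's. You observe that the compliance is simply the value of the fixed bounded linear functional $u\mapsto\int_{\partial_f\Omega} f\cdot u\,ds$ on $W^{1,2}(\Omega)$, so weak convergence of $u_k$ suffices. Note that you do not even need the compact trace embedding: since the trace map $W^{1,2}(\Omega)\to L^2(\partial\Omega)$ is continuous and $f\in L^2(\partial_f\Omega)$ is fixed, this boundary integral defines a continuous linear functional on $W^{1,2}(\Omega)$, and weak convergence $u_k\rightharpoonup\bar u$ gives the conclusion immediately by the very definition of weak convergence.

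The paper takes a different route. It tests the Euler--Lagrange equation for $\bar u$ with the admissible variation $\bar u-u_k\in\mathcal{U}_0$, obtaining
\[
\mathcal{C}(\bar\phi,S)-\mathcal{C}(\phi_k,S)=\int_\Omega\Big[(1-(F*\bar\phi)^p)\mathbb{C}_s\varepsilon(\bar u)+(F*\bar\phi)^p\mathbb{C}_r(S)(\varepsilon(\bar u)-\varepsilon^*(S))\Big]\cdot(\varepsilon(\bar u)-\varepsilon(u_k))\,dx,
\]
and then passes to the limit using weak convergence of $\varepsilon(u_k)$ in $L^2(\Omega)$ against the fixed $L^2$ stress field in brackets. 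This approach works through the bulk rather than the boundary, leaning on the variational structure instead of trace theory. Your argument is shorter and uses less of the problem's structure; the paper's argument makes the role of the equilibrium equation explicit and would generalize more readily to objectives that are not already linear functionals of the displacement on the boundary.
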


\begin{proof}
	Because $\bar{u}$ satisfies equilibrium~\eqref{eq:u_min}, and $(\bar{u} - u_k) \in \mathcal{U}_0$,
	\begin{equation}
	\mathcal{Q}_f(\bar{\phi}, \bar{u}, (\bar{u} - u_k) , S) = 0.
	\end{equation}
	Expanding and using the definition of the compliance~\eqref{eq:defCompliance_phi}, this can be written as
	\begin{equation}
	\begin{aligned} [b]
	\mathcal{C}(\bar{\phi}, S) = \  &\mathcal{C}(\phi_k, S) +  \int_{\Omega} \left[  (1 - (F * \bar{\phi})^p) \mathbb{C}_s \varepsilon(\bar{u})  + (F * \bar{\phi})^p \mathbb{C}_r(S) (\varepsilon(\bar{u}) - \varepsilon^*(S)) \right] \cdot (\varepsilon(\bar{u}) - \varepsilon({u_k})) \ dx. \\
	\end{aligned}
	\end{equation}
	Taking limits and noting that $u_k \rightharpoonup \bar{u}$ in $W^{1,2}(\Omega)$ gives
	\begin{equation}
	\mathcal{C}(\bar{\phi}, S) = \lim_{k \rightarrow \infty} \ \mathcal{C}(\phi_k, S).
	\end{equation}
\end{proof}

We are now ready to prove Theorem \ref{thm:ext} or the existence of minimizers to the optimization problem~\eqref{eq:optimization_problem_filter}.
\begin{proof}
	Let $ \{ \phi_k \} \subset \mathcal{D}_f$ be a minimizing sequence for~\eqref{eq:O_inf}. $\mathcal{D}_f$ implies that $\phi_k$ is uniformly bounded in $L^2(\Omega)$ and thus there exists $\bar{\phi} \in \mathcal{D}_f$ such that
	\begin{equation}
	\phi_k \rightharpoonup \bar{\phi} \ \text{ in } L^2(\Omega) \text{ when } k \rightarrow + \infty,
	\end{equation}
	up to a subsequence. Because $F \in L^2(\mathbb{R}^n)$,  
	\begin{equation}
	(F * \bar{\phi})(x) - \lim_{k \rightarrow \infty} \ (F * \phi_k)(x)  = \lim_{k \rightarrow \infty} \int_{\Omega} F(x - y) \left( \bar{\phi}(y) - \phi_k(y) \right) \ dy = 0.
	\end{equation}
	Since this holds for all $x \in \Omega$, 
	\begin{equation}
	F * \phi_k \rightarrow F * \bar{\phi} \ \text{uniformly on } \Omega \text{ when } k \rightarrow + \infty.
	\end{equation}
	Because $(F * \phi)(x)$ is bounded for all $\phi \in \mathcal{D}_f$, 
	\begin{equation}
	(F * \phi_k)^p \rightarrow (F * \bar{\phi})^p \ \text{uniformly on } \Omega \text{ when } k \rightarrow + \infty.
	\end{equation}
	Let $u_{1k}, u_{2k} \in \mathcal{U}$ be the equilibrium solutions to~\eqref{eq:u_min_f} corresponding to $\phi_k$ for $S = S_1$ and $S = S_2$, respectively:
	\begin{equation}
	u_{1k} = \arg \min_{u \in \mathcal{U}} \ \mathcal{E}_f (\phi_k, u, S_1), \qquad u_{2k} = \arg \min_{u \in \mathcal{U}} \ \mathcal{E}_f (\phi_k, u, S_2).
	\end{equation}
	 Then, from Lemma \ref{lem:00}, 
	\begin{equation}
	\begin{aligned}
	u_{1k} \rightharpoonup \bar{u}_{1} \text{ in } W^{1,2}(\Omega) \text{ when } k \rightarrow + \infty, \\
	u_{2k} \rightharpoonup \bar{u}_{2} \text{ in } W^{1,2}(\Omega) \text{ when } k \rightarrow + \infty,
	\end{aligned}
	\end{equation}
	where $\bar{u}_{1}, \bar{u}_{2} \in \mathcal{U}$ are the equilibrium configurations corresponding to $\bar{\phi}$ for $S = S_1$ and $S = S_2$:
	\begin{equation}
	\bar{u}_1 = \arg \min_{u \in \mathcal{U}} \ \mathcal{E}_f (\bar{\phi}, u, S_1), \qquad \bar{u}_2 = \arg \min_{u \in \mathcal{U}} \ \mathcal{E}_f (\bar{\phi}, u, S_2).
	\end{equation}
	From  Lemma \ref{lem:01},
	\begin{equation}
	\lim_{k \rightarrow \infty} \ \mathcal{C}(\phi_k, S_1) = \mathcal{C}(\bar{\phi}, S_1),  \qquad 	\lim_{k \rightarrow \infty} \ \mathcal{C}(\phi_k, S_2) = \mathcal{C}(\bar{\phi}, S_2).
	\end{equation}
	It follows
	\begin{equation}
	\lim_{k \rightarrow \infty} \ \bar{\mathcal{O}}(\mathcal{C}(\phi_k, S_1), \mathcal{C}(\phi_k, S_2)) = \bar{\mathcal{O}}(\mathcal{C}(\bar{\phi}, S_1), \mathcal{C}(\bar{\phi}, S_2)).
	\end{equation}
	Therefore,
	\begin{equation}
	\lim_{k \rightarrow \infty} \ \mathcal{O}(\phi_k) = \mathcal{O}(\bar{\phi}).
	\end{equation}
	Since $\phi_k$ is a minimizing sequence, 
	\begin{equation}
	\mathcal{O}(\bar{\phi}) = \inf_{\phi \in \mathcal{D}} \ \mathcal{O}(\phi).
	\end{equation}
\end{proof}

\subsection{Sensitivities through the adjoint method}
We  solve the optimal design problem using a gradient-based approach.  To do so, we need to compute the directional derivative of the objective function with respect to a design changes. To this end, we utilize an adjoint approach. Consider $u_1$ and $u_2$ associated with $S = S_1$ and $S = S_2$ which satisfy~\eqref{eq:u_min_f} for some design $\phi \in \mathcal{D}_f$. To find the directional derivative of some functional $\mathcal{F}(\phi, u_1, u_2)$, we introduce the augmented objective $\mathcal{L}(\phi,u_1,u_2,\lambda_1,\lambda_2) = \mathcal{F}(\phi, u_1, u_2)$ for any $\lambda_1,\lambda_2 \in \mathcal{U}_0$,
\begin{equation}
\label{eq:lagangianResponsiveSensitivity}
\mathcal{L}(\phi,u_1,u_2, \lambda_1, \lambda_2) := \mathcal{F} (\phi, u_1, u_2) + \mathcal{Q}_f(\phi, u_1, \lambda_1, S_1) + \mathcal{Q}_f(\phi, u_2, \lambda_2, S_2).
\end{equation}
One can easily show that the directional derivative of $\mathcal F$ in the direction $\tilde{\phi}$ is
\begin{equation}
\label{eq:derivativeResponsiveSensitivity}
\mathcal{F}^\prime (\phi) \tilde{\phi} = \mathcal{F}_{,\phi} (\phi, u_1, u_2) \tilde{\phi} + \mathcal{Q}_{f,\phi} (\phi, u_1, \lambda_1^*, S_1) \tilde{\phi} + \mathcal{Q}_{f,\phi} (\phi, u_2, \lambda_2^*, S_2) \tilde{\phi},
\end{equation}
where $\lambda^*_1, \lambda^*_2 \in \mathcal{U}_0$ are solutions of the uncoupled adjoint equations
\begin{equation}
\label{eq:adjointresponsiveSensitivity}\
\begin{cases}
\displaystyle
\mathcal{F}_{, u_1} (\Phi, u_1, u_2) \tilde{u} + \mathcal{Q}_{f, u_1} (\phi, u_1, \lambda_1^*, S_1) \tilde{u}  = 0 & \forall \  \tilde{u} \in \mathcal{U}_0,\\
\displaystyle
\mathcal{F}_{, u_2} (\Phi, u_1, u_2) \tilde{u} + \mathcal{Q}_{f, u_2} (\phi, u_2, \lambda_2^*, S_2) \tilde{u}  = 0 & \forall \  \tilde{u} \in \mathcal{U}_0.\\
\end{cases}
\end{equation}

\section{Objective functions} \label{sec:objectives}

\subsection{General workpiece objective}
We discuss a variety of objective functions used to characterize actuating systems. 
We define the general \emph{ workpiece objective} to be 
\begin{equation} \label{eq:workpiece_gen}
	\bar{\mathcal{O}} (\mathcal{C}(\phi, 0), \mathcal{C}(\phi, 1) ) = \frac{\kappa \mathcal{C}(\phi, 1) + 1}{\kappa \mathcal{C}(\phi, 0) + 1},
\end{equation}
where $\kappa \in (0, +\infty)$ is a parameter.  In the case where $f$ is a point load in direction $\hat{n}$ at point $x_0$, this objective is equivalent to maximizing the force carried by a linear elastic spring in direction $\hat{n}$ of stiffness $\kappa$ attached at $x_0$ (see Appendix~\ref{ap:workpiece}).   Further, this objective is dependent on the compliances of the stimulated and unstimulated states and therefore satisfies the conditions of  Theorem \ref{thm:ext}. 
This workpiece objective has interesting limits when the parameter tends to either zero or infinity.  

First, consider the limit of small $\kappa$. Using the Taylor expansion of~\eqref{eq:workpiece_gen} about $\kappa = 0$,
\begin{equation}
	\frac{\kappa \mathcal{C}(\phi, 1) + 1}{\kappa \mathcal{C}(\phi, 0) + 1} \approx 1 + \kappa \left( \mathcal{C}(\phi, 1) - \mathcal{C}(\phi, 0)\right).
\end{equation} 
Thus, for small $\kappa$ the workpiece objective is equivalent to the difference in compliance.  We show in Section~\ref{sec:work_obj} that this is equivalent to the \emph{work of actuation}.  Additionally, we will show that this is a measure of flexibility as it is equivalent to maximizing the displacement of actuation in a particular direction. 

Next, consider the limit of large $\kappa$:
\begin{equation}
	\lim_{\kappa \to + \infty} \ \frac{\kappa \mathcal{C}(\phi, 1) + 1}{\kappa \mathcal{C}(\phi, 0) + 1} = \frac{\mathcal{C}(\phi, 1)}{\mathcal{C}(\phi, 0)}.
\end{equation}
This, for large $\kappa$ the workpiece objective reduces to a ratio of compliances, which we will show is equivalent to the \emph{blocking load objective} in Section~\ref{sec:blocking_obj}. Because this it is a ratio of stimulated to unstimulated compliances, this objective considers not only the actuation flexibility, but also the unstimulated stiffness. 

\subsection{Work of actuation} \label{sec:work_obj}
The work of actuation is the work done against the applied load $f$ as we go from the unactuated to the actuated states:
\begin{equation}
	\label{eq:workobj}
	\mathcal{O}(\Phi) := -\int_\Omega f \cdot ( u_{S=1} - u_{S=0})\, ds
	= \mathcal{C}(\Phi,0) - \mathcal{C}(\Phi,1).
\end{equation}
The identity follows from (\ref{eq:defCompliance}) and shows that the work of actuation is equal to the difference in compliance.  To get further insight into this objective,  consider the case where the modulus of the responsive material is independent of the stimulus. Using~\eqref{eq:responsiveEquilibrium}, the objective~\eqref{eq:workobj} can now be written as
\begin{equation} 
	\label{eq:workobjv}
	\mathcal{O} (\Phi) = - \int_{\partial_f \Omega} f\cdot v \ ds,
\end{equation}
where $v$ satisfies 
\begin{equation} \label{eq:v}
	\begin{cases}
		-\mathrm{div} \left[\chi_s(x)\mathbb{C}_s\varepsilon(v) + \chi_r(x)\mathbb{C}_r \varepsilon(v) \right] =
		-\mathrm{div} \left[\chi_r(x)\mathbb{C}_r \varepsilon^*(1) \right]  & \text{in } \Omega,\\
		v = 0 & \text{on } \partial_u\Omega,\\
		\left[\chi_s(x)\mathbb{C}_s\varepsilon(v) + \chi_r(x)\mathbb{C}_r \varepsilon(v) \right] n = \left[\chi_r(x)\mathbb{C}_r \varepsilon^*(1)\right]  n 
		& \text{on } \partial_f\Omega .
	\end{cases}
\end{equation}
The solution $v$ to (\ref{eq:v}) is the displacement induced in the structure due to only the spontaneous strain field $\chi_r(x)\mathbb{C}_r \varepsilon^*(1)$, and may be expressed as  $v = \Gamma_\Phi \chi_r(x)\mathbb{C}_r \varepsilon^*(1)$ for the appropriate operator $\Gamma_\Phi$.  Thus, our optimal design problem is 
\begin{equation}
	\inf_{\Phi \in {\mathcal D}} \  \int_{\partial_f \Omega} f \cdot \left[\Gamma_\Phi \chi_r(x)\mathbb{C}_r \varepsilon^*(1) \right] \, ds,
\end{equation}
or finding the arrangement of the responsive material that maximizes the resulting spontaneous displacement in the direction of $f$. Thus, the actuation work objective is a measure of flexibility upon stimulation.

\subsection{Blocking load} \label{sec:blocking_obj}

The blocking load is the magnitude of the applied load that nullifies the actuation.  Consider the external traction as scaled by a nonzero constant $\alpha \in \mathbb{R}$, $f = \alpha \bar{f}$, where $\bar{f}$ is some (unit) loading profile.  
The blocking load is the value of $\alpha$ for which the displacement of the actuated structure in the direction of the loading profile vanishes
\begin{equation}
\mathcal{O}(\Phi) := \alpha  \quad \text{where} \quad \mathcal{C}_\alpha(\Phi, 1) = \int_{\partial_f \Omega} \bar{f} \cdot u_{f=\alpha \bar{f}, S=1} ds = 0.
\end{equation}
We now show that this is equivalent to the ratio of compliances.  From (\ref{eq:workobj}) and (\ref{eq:workobjv}),
\begin{equation} \label{eq:c10f}
	\mathcal{C}_\alpha(\Phi, 1)  = \mathcal{C}_\alpha(\Phi, 0) +  \int_{\partial_f \Omega} f \cdot v \, ds = \int_{\partial_f \Omega}  f \cdot (u_{S = 0,\alpha \bar{f}} + v) \, ds
\end{equation}
where $v$ solves (\ref{eq:v}) and is independent of $f$, and $u_{S = 0,\alpha \bar{f}}$ minimizes the elastic energy (\ref{eq:ElastEnergy}) with $S=0$ and $f= \alpha \bar{f}$.  Assuming homogeneous Dirichlet conditions $u_0 = 0$ on $\partial_u \Omega$, it is easy to see using the linearity of the Euler-Lagrange equations that $u_{S = 0,\alpha \bar{f}} = \alpha u_{S = 0,\bar{f}}$.  Therefore, the zero compliance condition $\mathcal{C}_\alpha(\Phi, 1) = 0$ can then be written as
\begin{equation}
	0 = \int_{\partial_f \Omega} f \cdot (\alpha \ \bar{u}_{S = 0,\bar{f}} + v) \, ds = \alpha \int_{\partial_f \Omega} \bar{f} \cdot (\alpha \ \bar{u}_{S = 0,\bar{f}} + v) \, ds,
\end{equation}
or
\begin{equation}
	0 = \int_{\partial_f \Omega} \bar{f} \cdot (\alpha \ \bar{u}_{S = 0,\bar{f}} + v) \, ds .
\end{equation}
The loading amplitude is then
\begin{equation}
	\alpha = -\frac{\int_{\partial_f \Omega} \bar{f} \cdot v \, ds}{\int_{\partial_f \Omega} \bar{f} \cdot \ \bar{u}_{S = 0,\bar{f}} \, ds} 
	= - \frac{\int_{\partial_f \Omega} \bar{f} \cdot \bar{u}_{S = 1,\bar{f}} \, ds}{\int_{\partial_f \Omega} \bar{f} \cdot \ \bar{u}_{S = 0,\bar{f}} \, ds} + 1
	= - \frac{\mathcal{C}_{1}(\Phi, 1)}{\mathcal{C}_{1}(\Phi, 0)} + 1.
\end{equation}
It follows that the blocking load objective is equivalent to minimizing the ratio of the stimulated to unstimulated compliance under fixed load
\begin{equation}
	\inf_{\Phi \in \mathcal{D}} \ \mathcal{O}(\Phi) = \frac{\mathcal{C}(\Phi, 1)}{\mathcal{C}(\Phi, 0)}.
\end{equation}
\vspace{\baselineskip}

We conclude with the comment comparing the two objectives, the work of actuation and the blocking load. Recalling the first identity in (\ref{eq:c10f}), we see that 
\begin{equation}
	\frac{\mathcal{C}_{1}(\Phi, 1)}{\mathcal{C}_{1}(\Phi, 0)} = 1 + \frac{1}{\mathcal{C}_{1}(\Phi, 0)} \int_{\partial_f \Omega} f \cdot v \, ds
\end{equation}
Thus, the blocking load objective is the ratio of the work of actuation objective to the compliance of the unstimulated structure.  Thus, the blocking load objective leads to a structure that balances the work of actuation and the stiffness of the structure.  Finally, these objectives become equivalent when the moduli of the structural and responsive materials are equal, i.e., when ${\mathbb C}_s = {\mathbb C}_r$.  This is because the compliance of the unstimulated state is independent of the design, i.e., ${\mathcal C}(\Phi,0) =C $ is independent of $\Phi$.

\section{Examples of optimal responsive structures} \label{sec:examples}

Here, we explore optimal designs for 2D rectangular lifting actuators. We present results for both the actuation work and blocking load objective, where each are computed under identical computational frameworks.

\begin{figure}
	\centering
	\includegraphics[width=0.3\textwidth]{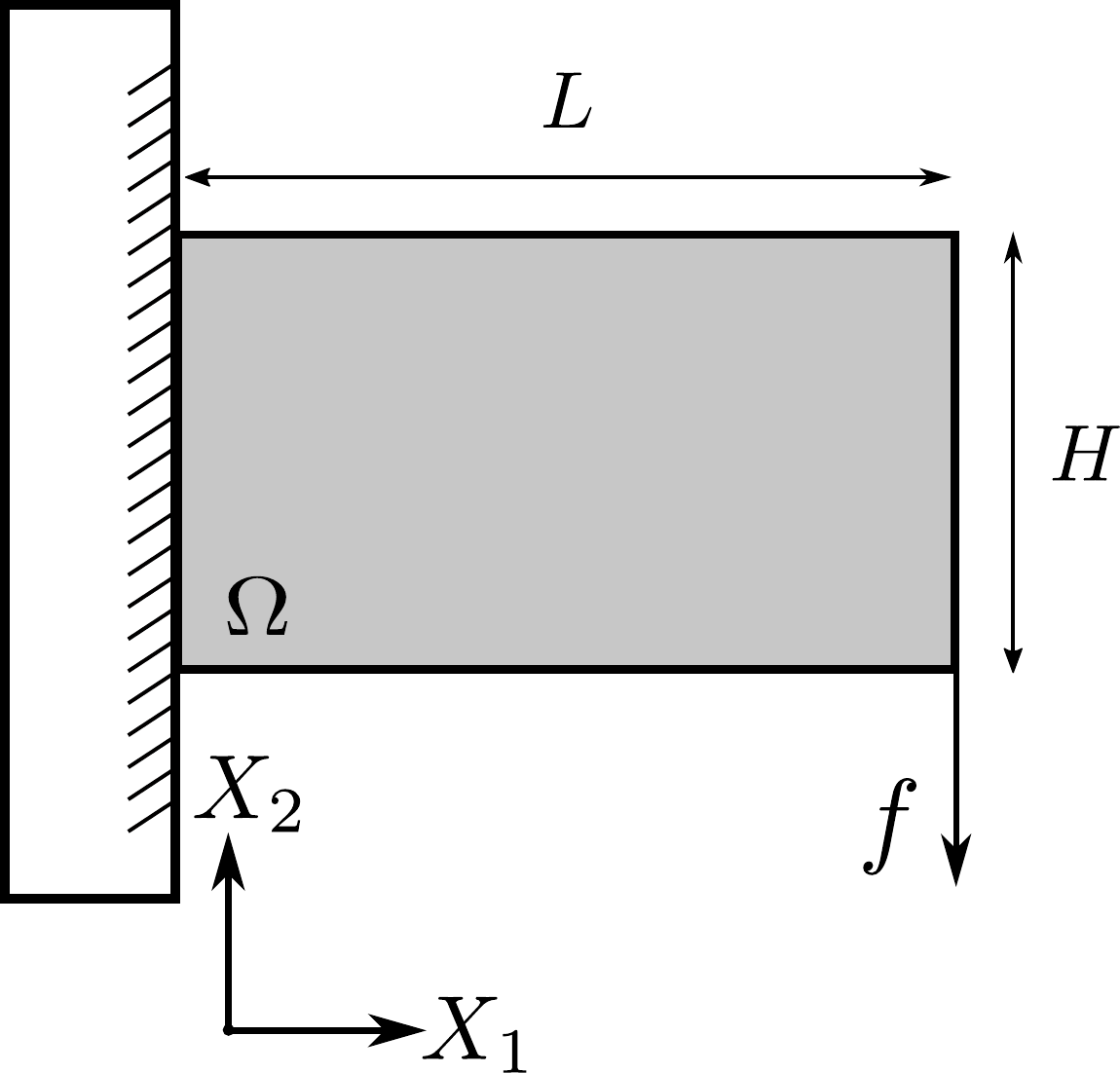}
	\caption{2D Cantilever of length $L$ and height $H$. The left edge at $X_1 = 0$ is fixed rigidly to the wall, with an applied point load $f$ in the bottom right corner.}
	\label{fig:rectangle}
\end{figure}

Consider a 2D rectangular domain $\Omega = (0, L) \times (0, H)$ as shown in Figure~\ref{fig:rectangle}. We take the elastic modulus of both the responsive and structural materials to be isotropic and independent of actuation.  We discretize with standard $p = 1$ Lagrange polynomial shape functions on a quadrilateral mesh through the C++ deal.II finite element library~\cite{Bangerth2007}. The density variable $\phi$ is taken to be constant on each element. As described in the previous section, we use a SIMP interpolation and density filter for regularization. We employ a discrete renormalizing filter as described in~\cite{Bourdin-2001}. This ensures that the density variable is able to take values of $\phi = 1$ near the boundary. Denoting $\phi_k$ as the constant density value of element $k$, and $V(k)$ the set of elements located within distance $R_f$ from element $k$, the filtered density value on element $k$ is
\begin{equation}
	(F * \phi)_{k} = \frac{ \sum_{i \in V(k)} \left( \phi_i \displaystyle \int_{i} F(x - c_{k}) \ dx \right)}{\sum_{i \in V(k)} \displaystyle \int_{i} F(x - c_{k}) \ dx },
\end{equation}
where $c_{k}$ is the center of element $k$. Sensitivities are calculated using the adjoint method, and the density is updated using the method of moving asymptotes (MMA) subject to the linear constraint on total responsive material~\cite{Svanberg1987}. Following convergence of these pixelated designs, a MATLAB$^{\text{\tiny{\textregistered}}}$ code traces smooth contours on the boundaries of the passive and active material domains. We initialize the design to uniform $\phi = \bar{V}_r/V$, and begin iterations thereafter.

\subsection{Optimizing the work of actuation} \label{sec:work}

\begin{figure}
	\centering
	\begin{subfigure}{0.433 \textwidth}
		\centering
		\includegraphics[width=0.99\textwidth]{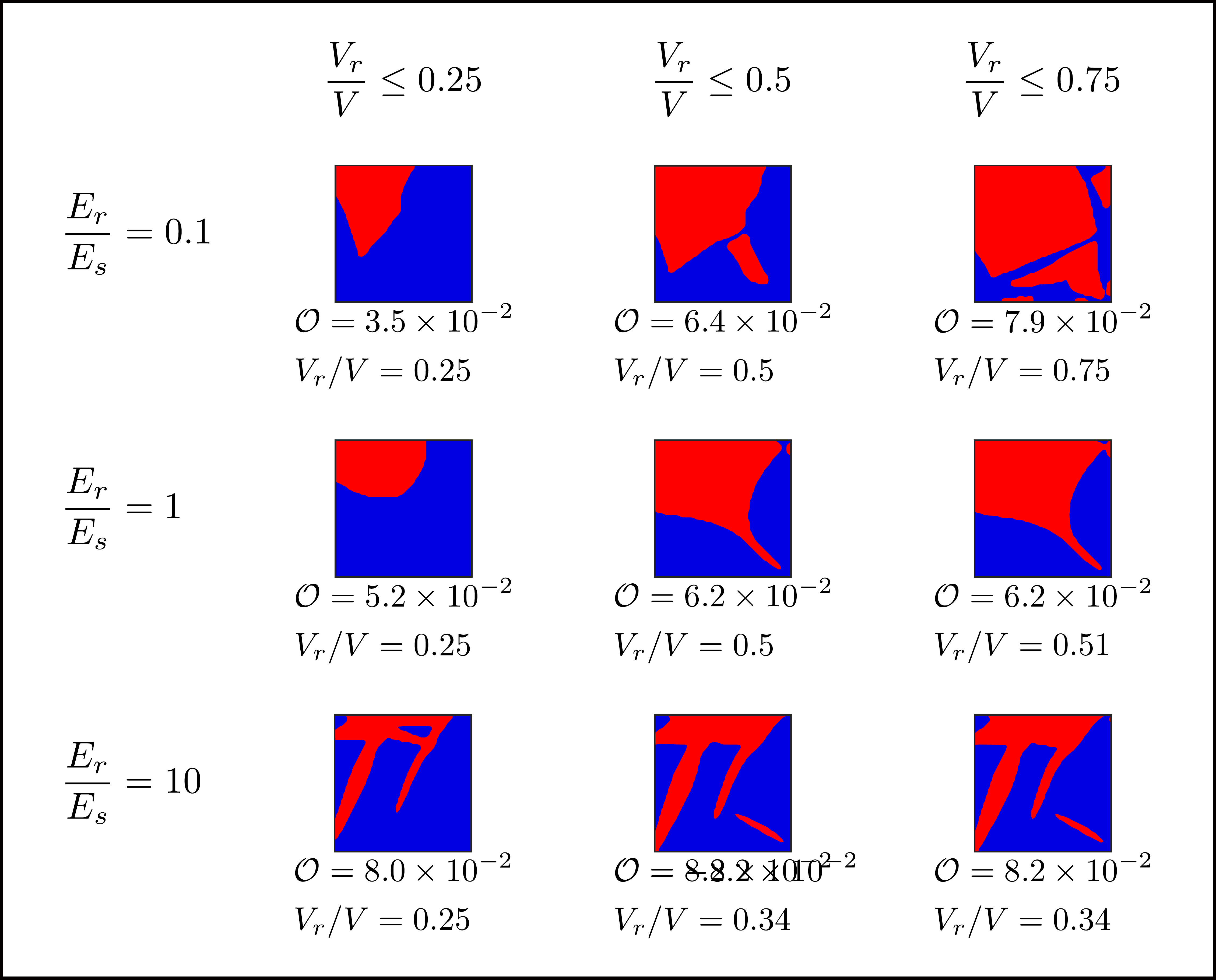}
		\caption{$L/H = 1$}
		\label{fig:bimorph_1}
	\end{subfigure}
	\begin{subfigure}{0.556 \textwidth}
		\centering
		\includegraphics[width=0.99\textwidth]{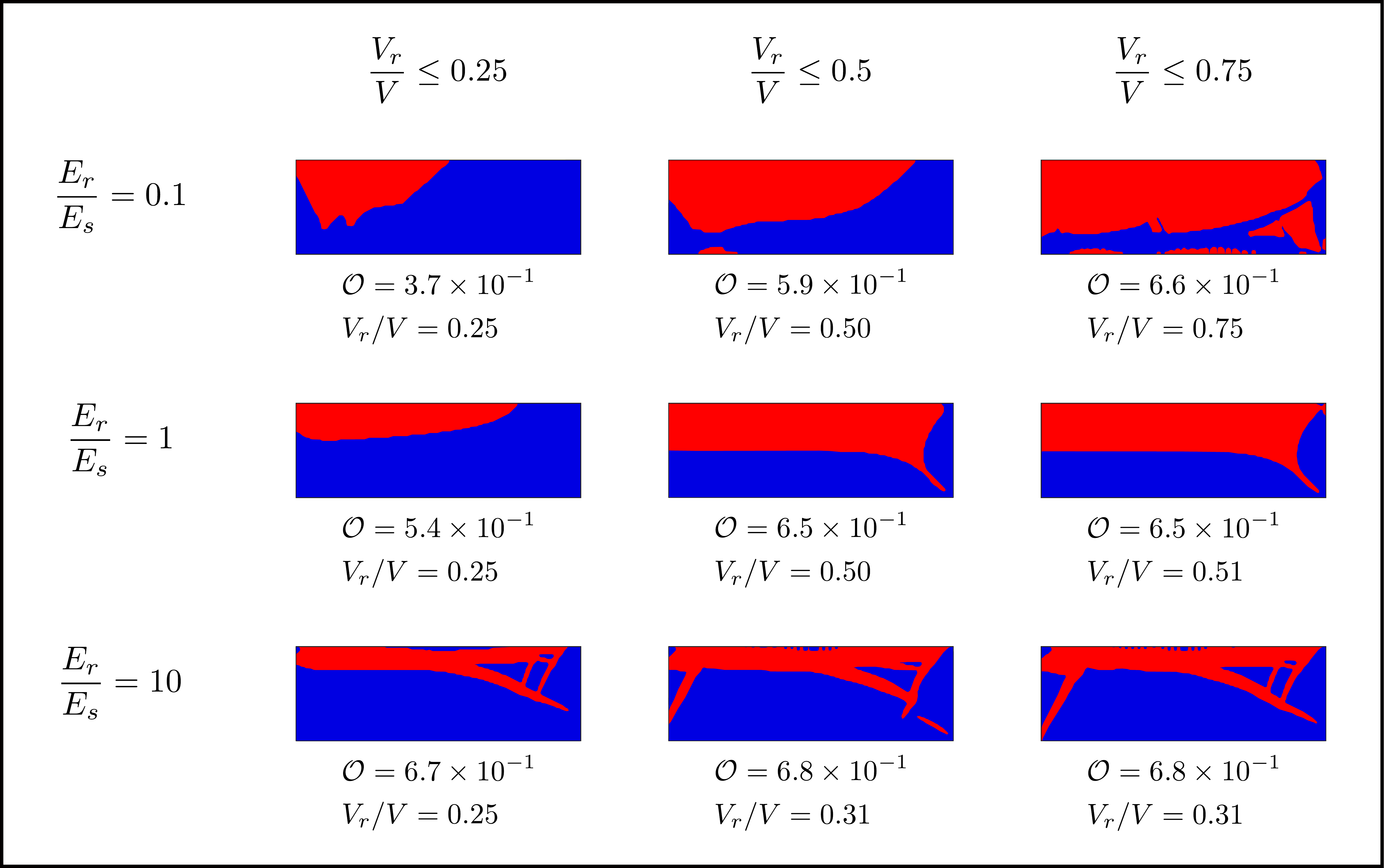}
		\caption{$L/H = 3$} 
		\label{fig:bimorph_3}
	\end{subfigure}
    \par\bigskip
    \begin{subfigure} {0.694 \textwidth}
		\centering
		\includegraphics[width=0.99\textwidth]{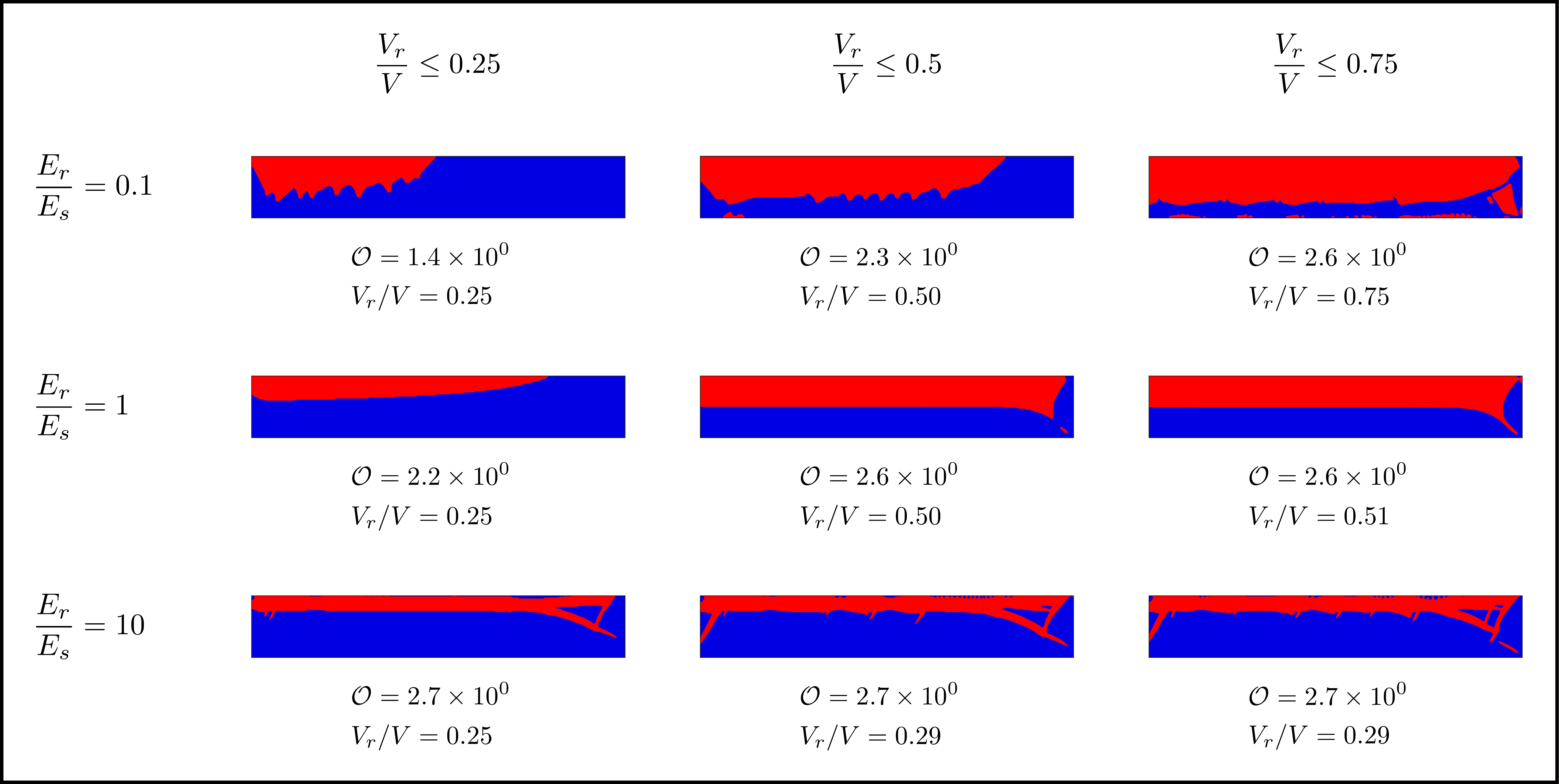}
		\caption{$L/H = 6$}
		\label{fig:bimorph_6}
	\end{subfigure}
	\caption{Converged bimorph designs for optimal work done through actuation. The red and blue regions are the active and passive materials, respectively. Varying aspect ratios, stiffness ratios, and responsive material volume constrains are considered. A Poisson ratio of $\nu = 0.3$ is used for both the passive and responsible materials. Normalized actuation work and volume ratios of the converged designs are shown.}
	\label{fig:work_bimorph_designs}
\end{figure}

We present optimal designs for the lifting actuator optimized for actuation work. Figure~\ref{fig:work_bimorph_designs} shows the converged designs for a spontaneous strain of $\varepsilon^*(1) = -0.1 e_1 \otimes e_1 + 0.1 e_2 \otimes e_2$ (elongation along the horizontal and extension along the vertical) in the responsive material upon stimulation, where $\{e_i\}$ is the standard basis aligned with the axis shown. We investigate these designs for varying domain aspect ratios $L/H$, responsive material volume constraints $\bar{V}_r/V$, and stiffness ratio $E_r/E_s$ of the responsive to passive material. These were computed on uniform finite element meshes of $60 \times 60$, $60 \times 180$, and $60 \times 360$ for aspect ratios of $L/H = 1$, $3$, and $6$, respectively. The filter radius was taken to be $1.5$ times the element width. 

Figure~\ref{fig:work_bimorph_designs} (a), (b) and (c) shows the designs for aspect ratios of $L/H = 1$, $3$, and $6$ respectively.  For each aspect ratio, the rows show the designs for fixed ratio of elastic moduli $E_r/E_s$ as the allowable ratio of active to passive material $V_r/V$ is varied.  Similarly, the columns show the designs for fixed allowable ratio of active to passive material $V_r/V$ but varying ratio of elastic moduli $E_r/E_s$.  Note that the ratio of responsive to passive material $V_r/V$ is imposed as an inequality constraint: the actual ratio used is indicated below each design.  Finally, the optimal value of the objective is also indicated below each design.

To understand these results, we start with Figure~\ref{fig:work_bimorph_designs} (b) where $L/H = 3$, and specifically with the middle row where $E_r/E_s=1$.  The design is similar to the bi-material strip with the active material on the top and passive material on the bottom.  When stimulated, the active material contracts horizontally, causing the domain to bend upward and perform work against the load.   When the allowed volume fraction of responsive material is small (left), the design uses the entire allowed volume fraction and places it close to the support since it can provide the maximum moment against the load.  As the allowed volume fraction of responsive material increases, the design continues to use the entire allowed volume fraction with roughly a uniform thickness.  However, at large allowed volume fraction (right), the design does not use the full allocation.  Instead, it saturates at about $51\%$ because it needs a sufficient amount of passive material to convert its horizontal contraction into work against the vertical load.   The value of the objective increases with the allowed volume fraction of responsive material, but saturates when the volume fraction does.

The designs remain roughly similar as we change the ratio of the stiffness of the responsive material to that of the structural material ($E_r/E_s$). The design uses more responsive material when it is more compliant (top row of  Figure~\ref{fig:work_bimorph_designs} (b) ), as it requires more of the responsive material to actuate against the stiffer structural material.  The opposite is true when the responsive material is stiffer (bottom row).  The value of the objective increases with the relative stiffness of the responsive material when we fix the allowed volume fraction (columns); however, the saturated value when we allow sufficient volume fraction is relatively independent of the stiffness ratio.

We now study the effect of aspect ratio $L/H$ comparing the designs of Figure~\ref{fig:work_bimorph_designs} (b) with those in Figure~\ref{fig:work_bimorph_designs} (a, c).  The designs and the trends against allowable  volume fraction of responsive material and stiffness ratio are similar (except for high stiffness of the responsive material and short aspect ratio where the design has diagonal laminates to provide stiffness against shear).  The optimal value increases with aspect ratio.

%

\subsection{Optimizing the blocking load} \label{sec:blocking}

We consider the same domain and loading as described in Figure~\ref{fig:rectangle}, and look to optimize the blocking load applied to the bottom right corner. The numerical schemes are identical, with the only difference being the objective function. Figure~\ref{fig:blocking_bimorph_designs} shows the converged designs for a spontaneous strain of $\varepsilon^*(1) = -0.1 e_1 \otimes e_1 + 0.1 e_2 \otimes e_2$ in the responsive material upon stimulation. The designs and the trends are broadly similar to those obtained by optimizing the work of actuation.  When the stiffnesses of the two materials are the same ($E_r/E_s = 1$) the designs coincide since the objectives are identical as noted above.  In the other situations, the blocking load designs tend to use more stiffer material (more structural material when $E_r/E_s = 0.1$ and more responsive material when $E_r/E_s = 10$).  We also see more diagonal reinforcement.

\begin{figure}
	\centering
	\begin{subfigure}{0.433 \textwidth}
		\centering
		\includegraphics[width=0.99\textwidth]{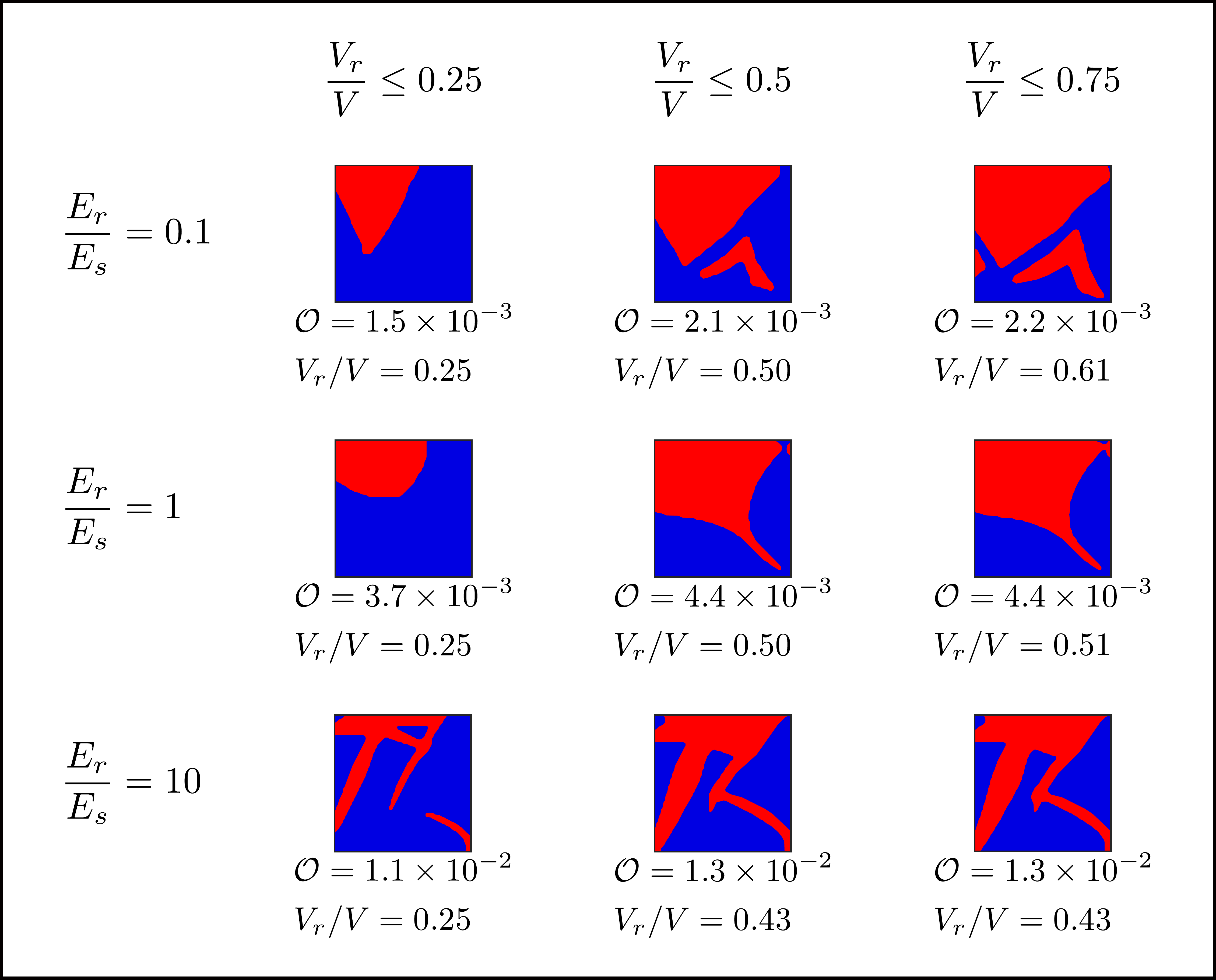}
		\caption{$L/H = 1$}
		\label{fig:bimorph_blocking_1}
	\end{subfigure}
	\begin{subfigure}{0.556 \textwidth}
		\centering
		\includegraphics[width=0.99\textwidth]{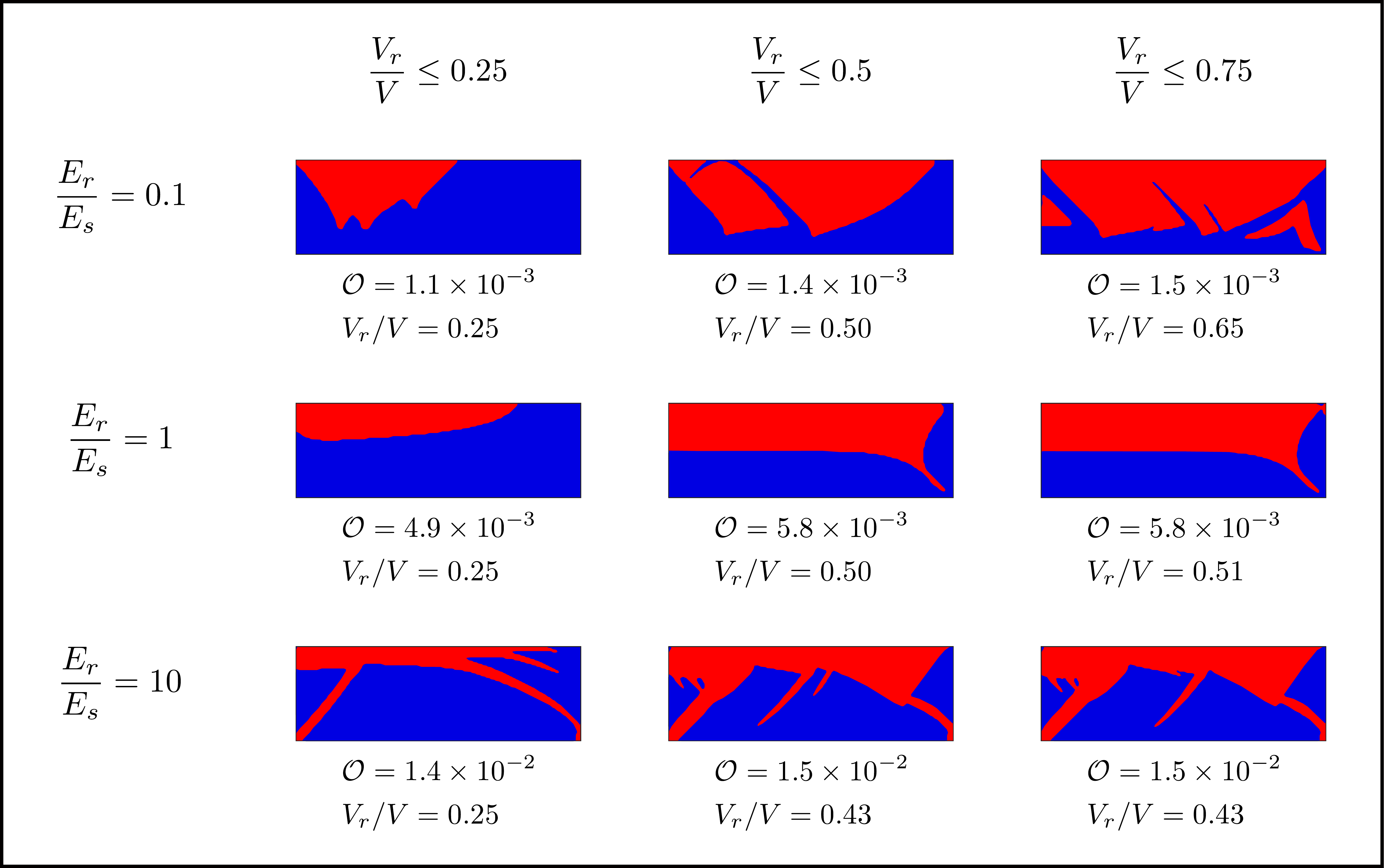}
		\caption{$L/H = 3$}
		\label{fig:bimorph_blocking_3}
	\end{subfigure}
	\par\bigskip
	\begin{subfigure} {0.694 \textwidth}
		\centering
		\includegraphics[width=0.99\textwidth]{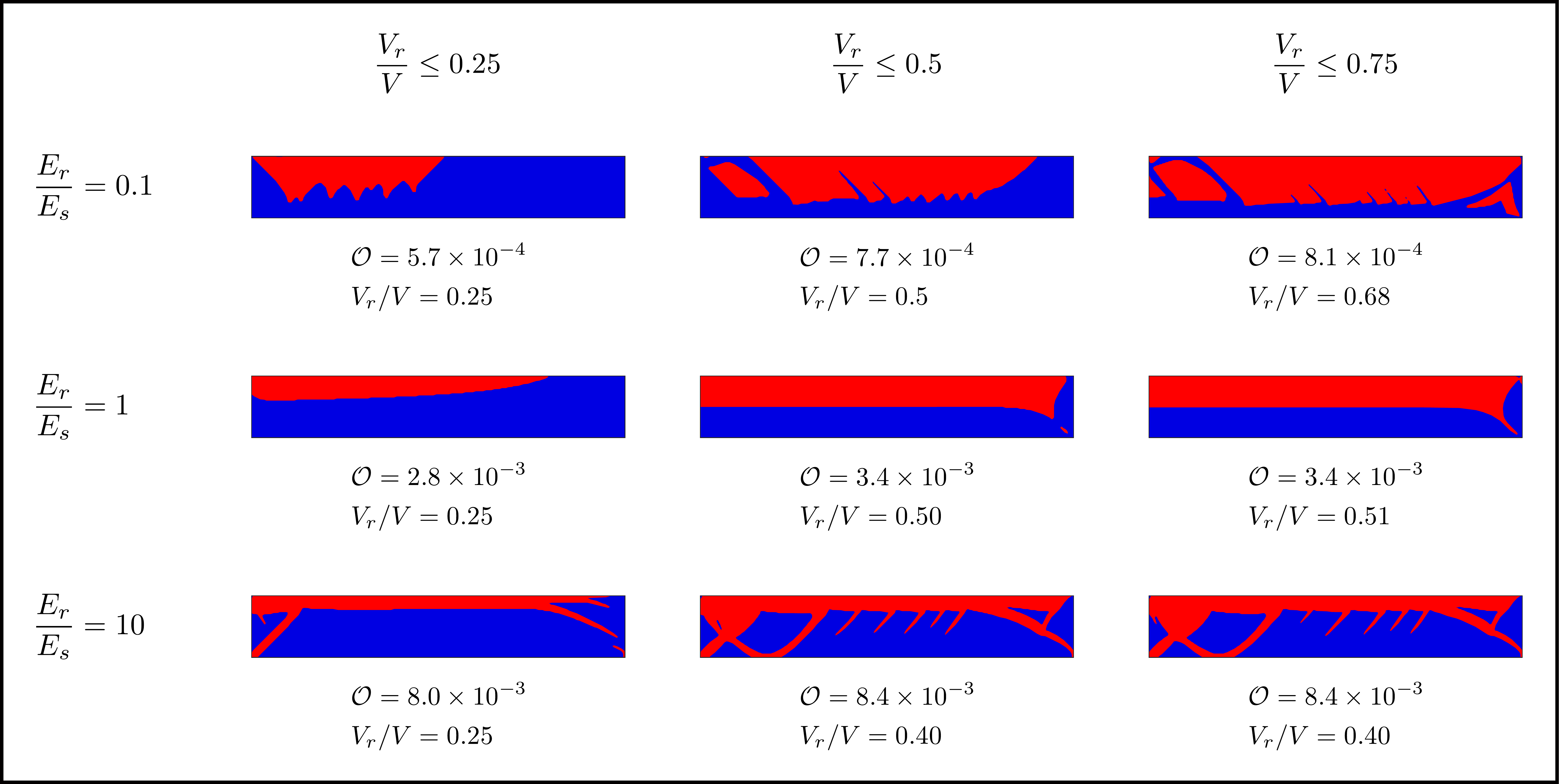}
		\caption{$L/H = 6$}
		\label{fig:bimorph_blocking_6}
	\end{subfigure}
	\caption{Converged bimorph designs for optimal actuation blocking load. The red and blue regions are the active and passive materials, respectively. Varying aspect ratios, stiffness ratios, and responsive material volume constrains are considered. A Poisson ratio of $\nu = 0.3$ is used for both the passive and responsible materials. Normalized blocking load values and volume ratios of the converged designs are shown. }
	\label{fig:blocking_bimorph_designs}
\end{figure}

\section{Optimizing responsive structures with voids} \label{sec:voids}

Motivated by a structural frame actuated by muscle-like actuators, we now consider a responsive structure with voids or holes in the domain.  We now have  $\chi_s + \chi_r \le 1$.  We introduce a SIMP interpolation and filter as before through an additional density variable. We consider $\rho: \Omega \mapsto [\rho_{min}, 1]$ for some $1 > > \rho_{min} >0$, which determines void or solid and consider the energy functional
\begin{multline} \label{eq:voidenergy}
\mathcal{E}_v(\phi, \rho, u, S) := \int_{\Omega} (F * \rho)^p \left( \frac{1}{2}\right)\big[  (1 - (F * \phi)^p) \mathbb{C}_s \varepsilon(u) \cdot \varepsilon(u) \\
+  (F * \phi)^p \mathbb{C}_r(S)(\varepsilon(u) - \varepsilon^*(S)) \cdot (\varepsilon(u) - \varepsilon^*(S)) \big] \ dx 
- \int_{\partial_f \Omega} f \cdot u \ ds.
\end{multline}
The compliance is, again, 
\begin{equation} \label{eq:compliancevoid}
\mathcal{C}(\phi, \rho, S) = \int_{\partial_f \Omega} f \cdot u \ ds,
\end{equation}
where $u$ is an equilibrium solution,
\begin{equation} 
u = \arg \min_{u \in \mathcal{U}} \ \mathcal{E}_v(\phi, \rho, u, S).
\end{equation}
Note that the voids have some residual stiffness since $\rho_{min} > 0$ to maintain the coercivity of (\ref{eq:voidenergy}).  We choose this small enough so that it has only a limited effect on the resulting designs. We again consider a compliance dependent objective
\begin{equation} \label{eq:optimization_problem_filter_void}
\inf_{\phi \in \mathcal{D}_f, \ \rho \in \mathcal{R}_f} \ \mathcal{O}(\phi, \rho) := \bar{\mathcal{O}}(\mathcal{C}(\phi, \rho, S_1), \mathcal{C}(\phi, \rho, S_2) ),
\end{equation}
where we optimize over the space of feasible designs
\begin{equation} \label{eq:RfDf}
\begin{aligned}
\mathcal{D}_f &= \left\{ \phi : \phi \in [0, 1] \text{ a.e. in } \Omega, \int_{\Omega} \rho \phi \ dx \leq \bar{V}_r \right \}, \\
\mathcal{R}_f &= \left\{ \rho : \rho \in [\rho_{min}, 1] \text{ a.e. in } \Omega, \int_{\Omega} \rho \ dx \leq \bar{V}_0 \right \}, 
\end{aligned}
\end{equation}
where $\bar{V}_0$ and $\bar{V}_r$ are the allowed volumes total material and responsive material, respectively.
\begin{theorem} \label{thm:ext_void}
	Recall the definition of the compliances from~\eqref{eq:compliancevoid}, and set
	\begin{equation}
	\mathcal{O}(\phi, \rho) := \bar{\mathcal{O}}(\mathcal{C}(\phi, \rho, S_1), \mathcal{C}(\phi, \rho, S_2) ),
	\end{equation}
	where $\bar{\mathcal{O}}$ is bounded below and continuous. There exists a $\bar{\phi} \in \mathcal{D}_f$ and $\bar{\rho} \in \mathcal{R}_f$ such that,
	\begin{equation} \label{eq:O_inf_void}
	\mathcal{O}(\bar{\phi}, \bar{\rho}) = \inf_{\phi \in \mathcal{D}_f, \ \rho \in \mathcal{R}_f } \ \mathcal{O}(\phi, \rho).
	\end{equation}
\end{theorem}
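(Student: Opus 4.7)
The plan is to adapt the proof of Theorem \ref{thm:ext} to the two-density setting, treating $\rho$ symmetrically to $\phi$ and invoking Lemmas \ref{lem:00} and \ref{lem:01} once I have verified that both convolutions converge uniformly. First I would take a minimizing sequence $\{(\phi_k,\rho_k)\}\subset\mathcal{D}_f\times\mathcal{R}_f$; since both sequences are bounded in $L^\infty(\Omega)$ they are bounded in $L^2(\Omega)$, and passing to subsequences gives weak limits $\phi_k\rightharpoonup\bar{\phi}$ and $\rho_k\rightharpoonup\bar{\rho}$ in $L^2(\Omega)$. The pointwise box bounds $\bar{\phi}\in[0,1]$ and $\bar{\rho}\in[\rho_{\min},1]$ and the linear volume bound $\int_\Omega\bar{\rho}\,dx\le\bar{V}_0$ all pass to the limit by weak closedness of the corresponding convex sets. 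Applying the argument from the opening of the Theorem \ref{thm:ext} proof once to each sequence then yields uniform convergence of $(F*\phi_k)^p$ to $(F*\bar{\phi})^p$ and of $(F*\rho_k)^p$ to $(F*\bar{\rho})^p$ on $\Omega$.

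I would then reprove Lemmas \ref{lem:00} and \ref{lem:01} in the void setting. The added prefactor $(F*\rho)^p$ is uniformly bounded away from zero because $\rho_k\geq\rho_{\min}$ forces $(F*\rho_k)^p\geq\rho_{\min}^p$, so the ellipticity estimate of Remark \ref{ellipticity_remark} still holds with $m$ simply rescaled by $\rho_{\min}^p$. The $W^{1,2}(\Omega)$ bound on the equilibrium displacements in Lemma \ref{lem:00} is therefore uniform in $(\phi_k,\rho_k)$, and the minimizer-comparison argument there identifies the weak limit $u^\infty$ of $u_k^{(i)}$ with the equilibrium displacement $\bar{u}^{(i)}$ associated to $(\bar{\phi},\bar{\rho})$ and $S_i$ for $i=1,2$. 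The proof of Lemma \ref{lem:01} then carries over essentially unchanged to give $\mathcal{C}(\phi_k,\rho_k,S_i)\to\mathcal{C}(\bar{\phi},\bar{\rho},S_i)$, and continuity of $\bar{\mathcal{O}}$ closes the argument.

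The genuine difficulty I anticipate is the coupled volume constraint $\int_\Omega \rho\phi\,dx\le\bar{V}_r$ that enters the definition of $\mathcal{D}_f$: this functional is bilinear in $(\phi,\rho)$ and hence is not automatically preserved under joint weak $L^2$ convergence of $(\phi_k,\rho_k)$. To handle it I would extract a further subsequence for which $\rho_k\phi_k$ converges weak-$*$ in $L^\infty(\Omega)$ to some $\sigma\in L^\infty(\Omega)$ satisfying $\int_\Omega\sigma\,dx\le\bar{V}_r$, and then argue $\sigma=\bar{\rho}\bar{\phi}$ by exploiting the uniform (hence $L^2$-strong) convergence of $F*\phi_k$ and $F*\rho_k$ together with the uniform box bounds on the sequences; an alternative, possibly cleaner, route is to recast the constraint in terms of filtered densities, for which weak limits already behave correctly. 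Apart from this single step, every other ingredient is an immediate two-density variant of the argument for Theorem \ref{thm:ext}.
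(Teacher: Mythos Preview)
Your approach is exactly what the paper does: its proof of Theorem~\ref{thm:ext_void} is a two-sentence sketch stating that Lemmas~\ref{lem:00} and~\ref{lem:01} extend to the additional filtered field $\rho$ and that the rest follows Theorem~\ref{thm:ext}. Your write-up fills in precisely those details (uniform convergence of both $(F*\phi_k)^p$ and $(F*\rho_k)^p$, ellipticity rescaled by $\rho_{\min}^p$, and the compliance continuity), so on the analytical core you and the paper agree.

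Where you go beyond the paper is in flagging the coupled constraint $\int_\Omega \rho\phi\,dx\le\bar V_r$ in the definition of $\mathcal D_f$. You are right that this is a genuine issue: the functional is bilinear and is \emph{not} weakly lower semicontinuous under joint weak $L^2$ convergence of $(\phi_k,\rho_k)$, and the paper's sketch does not address it. However, your first proposed fix will not close the gap. Strong (even uniform) convergence of $F*\phi_k$ and $F*\rho_k$ gives no upgrade on the convergence of the unfiltered sequences themselves, so it cannot force the weak-$*$ limit $\sigma$ of $\rho_k\phi_k$ to equal $\bar\rho\,\bar\phi$; a checkerboard sequence $\phi_k\in\{0,1\}$ with $\phi_k\rightharpoonup\tfrac12$ and $\rho_k=\phi_k$ already shows $\rho_k\phi_k=\phi_k\rightharpoonup\tfrac12\neq\tfrac14=\bar\rho\,\bar\phi$, while $F*\phi_k$ and $F*\rho_k$ converge uniformly. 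Your alternative suggestion---replacing the constraint by one on filtered quantities, e.g.\ $\int_\Omega (F*\rho)(F*\phi)\,dx\le\bar V_r$---does work, since uniform convergence of both filtered fields then passes to the product; another simple repair is to decouple the constraint to $\int_\Omega\phi\,dx\le\bar V_r$ as in the void-free case. Either way, this is a point the paper leaves open, and your instinct to single it out is correct even if the first remedy you sketch is not.
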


\begin{proof}
    The weak continuity results from Lemmas \ref{lem:00} and \ref{lem:01} can be extended for the additional filtered density field $\rho$. The rest of the proof follows the same steps as the proof for Theorem~\ref{thm:ext}. 
\end{proof}

We now consider the two objectives that we introduced in the previous section.  We begin with the work of actuation in Section \ref{sec:work} which is the difference between the compliances in the stimulated and unstimulated states.  However, the compliances are not bounded since we have voids\footnote{Precisely, it is bounded by a constant that depends on $\rho_{min}$ and becomes unbounded as $\rho_{min} \to 0$}.  Thus, this objective does not satisfy the hypothesis of the theorem, and a brute-force implementation does not converge to meaningful designs.

So we focus on the blocking load or mechanical advantage introduced in Section \ref{sec:blocking}.  Since this objective considers the ratio of the two compliances, it remains bounded satisfying the hypothesis of the theorem above.  Specifically, we consider the optimization problem
\begin{equation}
\inf_{\phi \in \mathcal{D}_f,\  \rho \in \mathcal{R}_f} \ \mathcal{O}(\phi) = \frac{\mathcal{C}(\phi, \rho, 1)}{\mathcal{C}(\phi, \rho, 0)}.
\end{equation}

The numerical schemes are nearly identical to the case of no holes, except for an additional density field. We consider this density variable constant on each element. We adopt a sequential update scheme to handle the nonlinear constraint posed in~\eqref{eq:RfDf}. After obtaining sensitivities through the adjoint method, the discrete $\rho$'s are updated using MMA under the linear constraint of allowable material. Then, using the newly updated $\rho$'s to write the constraint as linear, we update the $\phi$'s with another MMA. This results in only applying linear constraints for updates.

\subsection{Example in two dimensions: Lifting actuator} \label{sec:voids2D}

\begin{figure}
	\centering
	\includegraphics[width=0.9\textwidth]{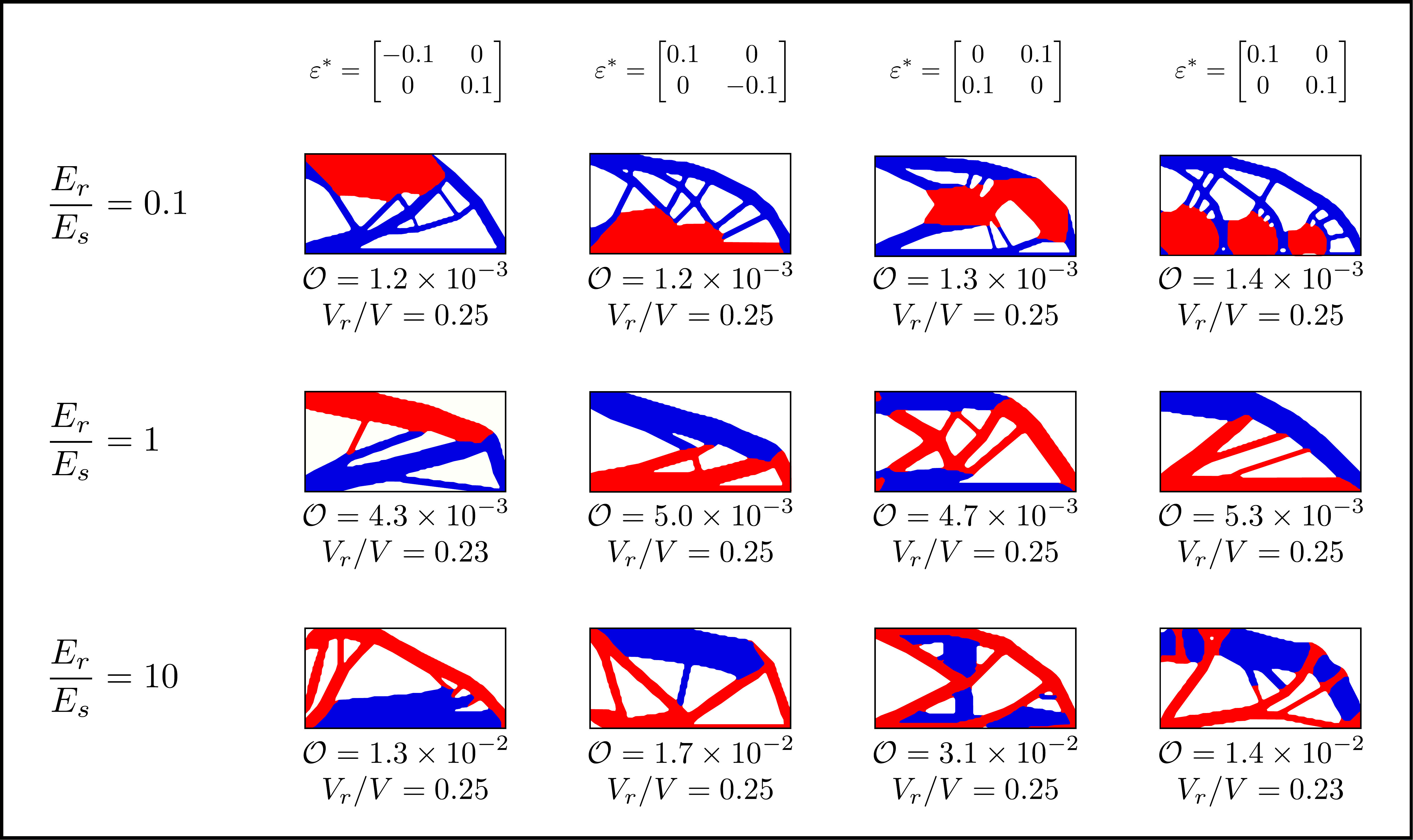}
	\caption{Converged designs for maximum blocking load of a 2D cantilever structure with aspect ratio $L/H = 2$. The inequality constraints $V_0/V \leq 0.5$ and $V_r/V \leq 0.25$ are enforced for all cases. The red and blue regions are the active and passive materials, respectively. Designs are shown for varying stiffness ratios for different spontaneous strains. A Poisson ratio of $\nu = 0.3$ is used for both the passive and responsible materials. Normalized blocking load values and converged responsive material volume ratios are shown. In all cases, the designs converged to $V_0/V = 0.5$. }
	\label{fig:multi_blocking}
\end{figure}

We look to optimize the blocking load applied to the bottom right corner of Figure~\ref{fig:rectangle}. We consider a domain aspect ratio of $L/H = 2$, and a uniform finite element mesh of $60\times120$ quadrilateral elements. The filter radius for both of the SIMP variables is taken as $1.5$ times the element width, $R_f = 0.0125 L$.  We constrain the responsive materials to be a quarter of less of the total area ($V_r/V \leq 0.25$), and the combination of the structural and responsive materials to be less than half the total area ($V_0/V \leq 0.5$ ). The designs are all initialized to uniform $\phi = \bar{V}_r/\bar{V}_0$ and $\rho = \bar{V}_0/V$.

Figure~\ref{fig:multi_blocking} shows converged designs following contour smoothing in MATLAB$^{\text{\tiny{\textregistered}}}$. Designs are shown for various spontaneous strains of responsive material and various ratios of the stiffnesses of the responsive and structural materials: the columns have the same spontaneous strain while the rows have the same stiffness ratio.  Consider the first column where the spontaneous strain contracts along the horizontal and expands along the vertical.  As in the situation without the voids (Figure \ref{fig:work_bimorph_designs}), the active material is concentrated on the top. Also as before, in the cases of large stiffness contrast we see thick domains of the softer material, whether that be passive or responsive.  However, in this situation,  the stiffer material resembles a frame as in the classical problem of optimizing the compliance under a volume constraint.

The overall shape remains similar even when the spontaneous strain of responsive materials change.  However, the placement of the responsive material changes significantly.   For example, in the second column where the spontaneous strain is an elongation along the horizontal direction and contraction along the vertical, the responsive material is concentrated at the bottom. It should be noted that in all of these cases the designs saturated the total allowed material, converging to $V_0/V = 0.5$. Additionally, nearly all of the designs saturated the constraint on responsive material.

\subsection{Example in three dimensions: Torsional actuator} \label{sec:voids3D}

Thus far we have only considered the design of plane strain 2D structures. Here, we study the 3D design of a torsional actuator for optimal blocking load. We consider the cylindrical domain shown in Figure~\ref{fig:cylinder}, with one face completely fixed to a wall and uniform tangential loading applied to its far edge. Optimizing the blocking load under this loading is analogous to maximizing the blocking torque of the actuator. We consider the formulation of the previous section for two materials with voids, with identical numerical schemes.
\begin{figure}
	\centering
	\includegraphics[width=0.4\textwidth]{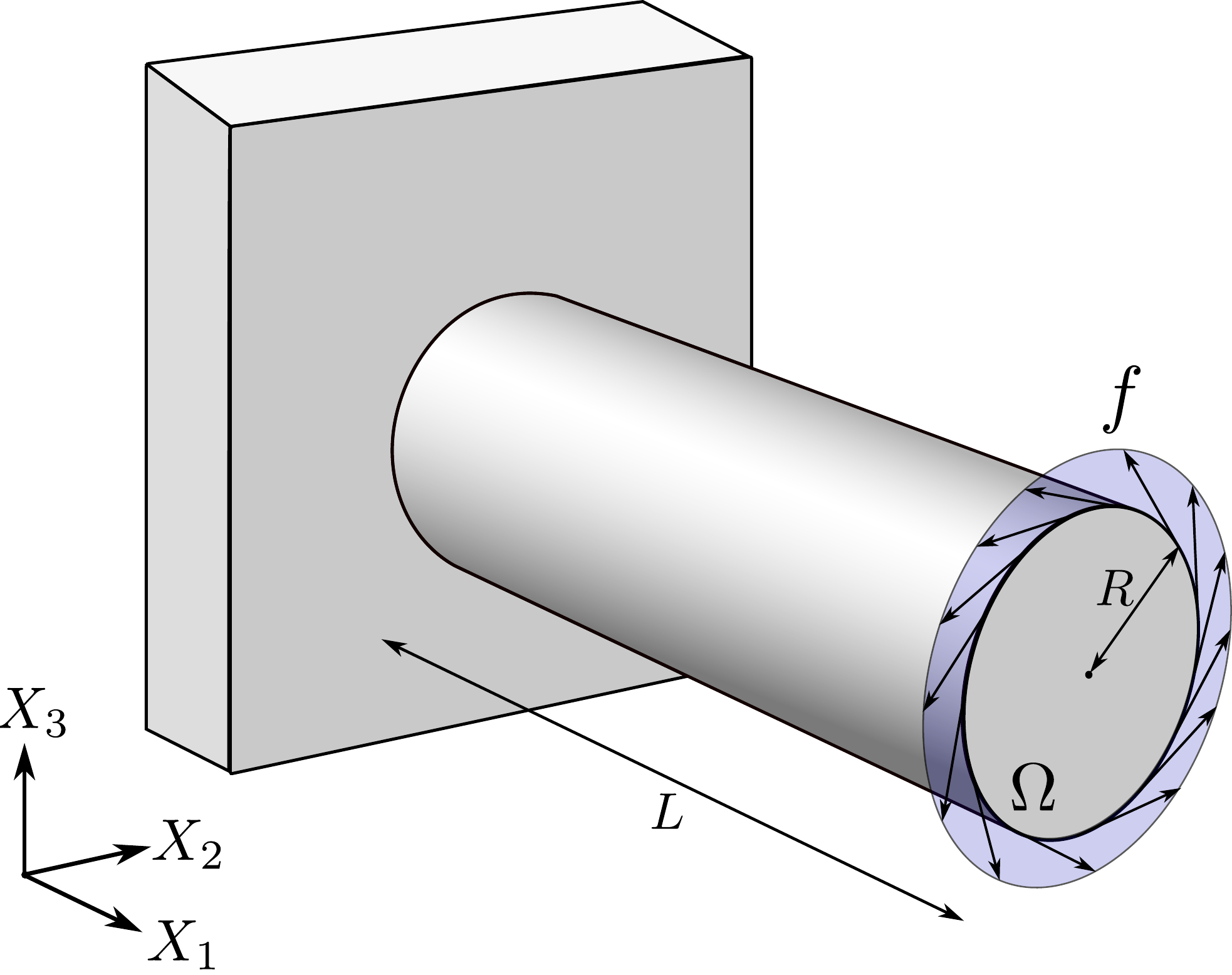}
	\caption{Cylindrical geometry for the torsional actuator of radius $R$ and length $L$. The face at $X_1 = 0$ is fixed rigidly to the wall, and the far edge at $X_1 = L$ has uniform tangential loading. }
	\label{fig:cylinder}
\end{figure}

We consider a cylindrical domain of aspect ratio $L/R = 4$, computed on a finite element mesh of 245,760 hexahedral elements. This corresponds to a characteristic element side length of about $L/100$. To account for the variability in element sizes, the filter radius was taken rather large at 3 times this length or $R_f = 0.03 L$.  We investigate varying total material volume constraints $V_0/V$, and stiffness ratios $E_r/E_s$ of the responsive to passive material. We consider the volume of responsive material to be constrained to $V_r / V_0 \leq 0.5$. Thus, we restrict the amount of responsive material to be less than half the amount of total material in the domain. We consider two cases of spontaneous strain upon stimulation: a transversely isotropic elongation along $X_1$ and contractions in the $X_2 - X_3$ plane, as well as isotropic contraction. 

Figure~\ref{fig:cylinder_transverse} shows converged designs of the torsional actuator for a transversely isotropic transformation strain of $\varepsilon^*(1) = 0.1 e_1 \otimes e_1 - 0.05 e_2 \otimes e_2 - 0.05 e_3 \otimes e_3$. As one would expect, we see the responsive material is arranged helically towards the outer edges of the domain. Similarly to the 2D case, for $E_r/E_s = 0.1$ we see thicker clumps of responsive material, where for $E_r/E_s = 10$ it is spread more thinly. For $V_0/V = 0.25$, the material is mostly concentrated towards the outer edges of the domain, with more material near the center for $V_0/V = 0.5$. This is understandable, as torsional stiffness is maximized by placing material farther from the center. 

Figure~\ref{fig:cylinder_isotropic} shows converged designs for the torsional actuator for an isotropic transformation strain of $\varepsilon^*(1) = -0.033\ I_{3\times 3}$. The general trends outlined in the previous discussion remain valid. However, the direction of the helical responsive material now goes in the opposite direction, as the responsive material now contracts rather than expands along the $X_1$ direction. 

For both cases of spontaneous strains, the designs completely saturate the total allowed material converging to $V_0 = \bar{V}_0$. Additionally, the volume of responsive material is nearly saturated in all cases. 

In the previous 2D cases, the gradients of the objective with the design variables for the initial uniform density designs were sizeable. This lead to fast convergence through the MMA algorithm from initialization. However, for the 3D torsional actuator, the gradients of the blocking load objective with the design variable $\phi$ were nearly zero for a uniform density design, especially in the case of $E_r/E_s = 1$. This resulted in dozens of early iterations with small changes to design. To remedy this, we consider an initial nonuniform configuration
\begin{equation}
    \phi(X_1, X_2, X_3) = (V_r/V) + \epsilon \cos \left(2 \theta - \frac{2 \pi X_1}{L} \right),
\end{equation}
where $\theta = \tan^{-1}(X_3/X_2)$. $\epsilon = 0.05$ was used for the previously described designs. This initial design is a small perturbation towards a helical $\phi$ with two ``strands'' running along the $X_1$ axis. This is the reason that the converged designs all have two main ``strands'' of responsive material. While the form of the perturbation may seem presumptuous, the magnitude of the perturbation was small. We also used the same perturbation for both cases of spontaneous strain which resulted in converged designs with helices in different directions. Additionally, initializing the designs with random perturbations resulted in designs that, while different, had objective values within $3\%$ of that of the helical perturbation. Thus, we argue that this perturbation is an acceptable means to quicker convergence. It should be noted that while we have proven existence of solutions, there is no uniqueness. It is expected for problems of this nature to have multiple local minima and for the initial guess to have a sizeable effect on the converged design. However, so long as the final objective value does not differ markedly, the designs are all adequate.

\begin{figure}[ht]
	\centering
	\includegraphics[width=0.92\textwidth]{figures/cylinders_transverse_obj_vol.pdf}
	\caption{Converged designs for maximum blocking torque on the cylindrical domain for the transversly isotropic transformation strain $\varepsilon^*(1) = 0.1 e_1 \otimes e_1 - 0.05 e_2 \otimes e_2 - 0.05 e_3 \otimes e_3$. The red is the responsive material and the blue passive. Designs are shown for varying moduli ratios and amount of total allowed material. The ratio of responsive material to passive material was constrained to $V_r/V_0 \leq 0.5$ for all cases. That is, the left column is constrained to $V_r/V \leq 0.125$ and the right to $V_r/V \leq 0.25$. Normalized blocking torque values and converged responsive material volume ratios are shown. In all cases, the designs converged to $V_0/V = 0.5$.}
	\label{fig:cylinder_transverse}
\end{figure}

\begin{figure}[ht]
	\centering
	\includegraphics[width=0.92\textwidth]{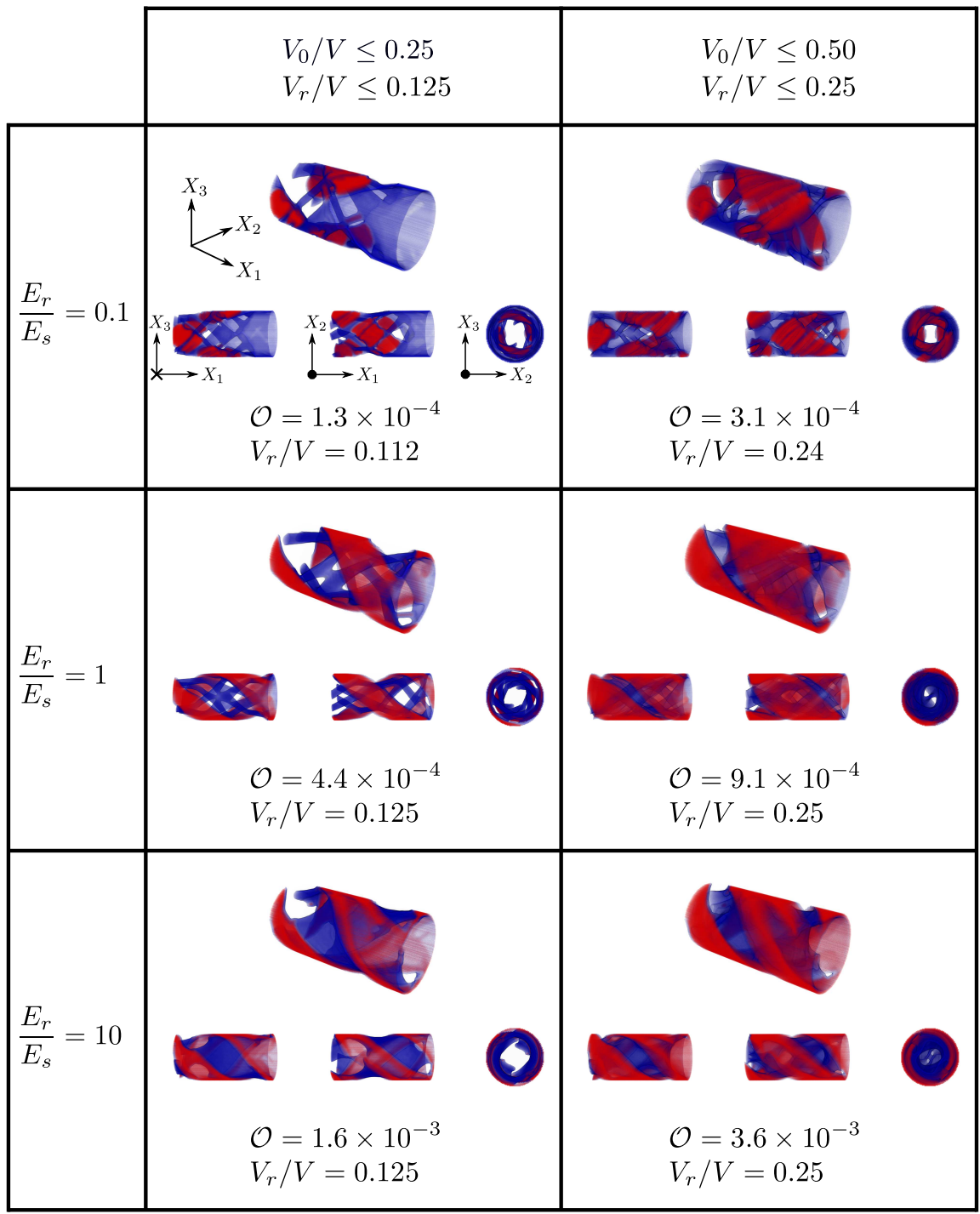}
	\caption{Converged designs for maximum blocking torque on the cylindrical domain for the volumetric transformation strain $\varepsilon^*(1) = -0.033 \ I_{3\times 3}$. The red is the responsive material and the blue passive. Designs are shown for varying moduli ratios and amount of total allowed material. The ratio of responsive material to passive material was constrained to $V_r/V_0 \leq 0.5$ for all cases. That is, the left column is constrained to $V_r/V \leq 0.125$ and the right to $V_r/V \leq 0.25$. Normalized blocking torque values and converged responsive material volume ratios are shown. In all cases, the designs converged to $V_0/V = 0.5$.}
	\label{fig:cylinder_isotropic}
\end{figure}

\section{Conclusions} \label{sec:con}

We have investigated the optimal design of responsive structures through topology optimization. By considering a filtering scheme and SIMP interpolation, we have proven existence of optimal designs for a class of objective functions dependent on the compliances of the stimulated and unstimulated states. In particular, we have considered the actuation work and blocking load objective. We showed that these can both be written as functions of compliances. For each of these objectives, we presented  numerical results for the design of bimorph actuators on a 2D rectangular domain. The converged designs contain complex structures that would otherwise be difficult to intruitively conjure, especially for the blocking load objective. Additionally, we considered the introduction of voids for the blocking load optimal design. This resulted in a rich array of structures highly dependent on the spontaneous strain and the stiffness ratios of the passive and responsive materials. Finally, we considered the design of a 3D torsional actuator for maximum blocking torque. We investigated the design for varying stiffness and volume ratios for two cases of transformation strain. As expected, the converged designs had responsive material distributed in helices at the outer edges of the domain, with the direction dependent on the transformation strain. 

We now discuss further directions that could extend this work. Here, we only consider linear elastic materials. As many active materials may undergo finite strains through both deformation and actuation, it may be worth investigating the design of structures under richer material models. In particular, geometric nonlinearities may lead to insightful designs. Another extension is coupling the stimulation and response. Physically, this could be realized though a number of mechanisms including magnetostriction, heat diffusion for shape memory alloys, or photo-responsive materials. The challenges would not only include the formulation and implementation for such a system, but also the choice of a suitable objective function. Additionally, in our work we considered the actuation strain to be prescribed and constant throughout the whole domain. With the recent developments in directional 3D printing in materials such as liquid crystal elastomers~\cite{ketal_advmat_19}, an interesting extension might involve optimizing over the responsive materials spatially varying direction as well.

\FloatBarrier

\paragraph{Acknowledgement}  We are grateful for the financial support of the U.S. National Science Foundation through ``Collaborative Research: Optimal Design of Responsive Materials and Structures'' (DMS:2009289 at Caltech and DMS:2009303 at LSU and McMaster University).


\bibliographystyle{abbrv}
\bibliography{responsive}

\newpage
\appendix

\section{Workpiece objective as force in spring} \label{ap:workpiece}
Here we show the workpiece objective is equivalent to maximizing the load of a point spring. Consider a linear spring in direction $\hat{n}$ of spring constant $\kappa > 0$ connected to the boundary of the domain at some point of interest $x_0 \in \partial_f \Omega$. The aim is to maximize the load carried by this spring upon actuation. Thus, we look to maximize the load in the spring:
\begin{equation}
	\sup \{ f_0 : f_0 = -\kappa u(x_0) \cdot \hat{n},  \Phi \in \mathcal{D} \}
\end{equation}
where $u$ is the equilibrium solution corresponding to $S = 1$ and $f = f_0 \delta(x - x_0) \hat{n}$. Assuming homogeneous Dirichlet conditions $u_0 = 0$ on $\partial_u \Omega$, it is easy to see using the linearity of the Euler-Lagrange equations
\begin{equation}
	u = v + u_{S = 0, f_0 \hat{n}}	
\end{equation}
where $u_{S = 0, f_0 \hat{n}}$ minimizes the elastic energy~\eqref{eq:ElastEnergy} with $S = 0$ and $f = f_0 \delta(x - x_0) \hat{n}$. Invoking linearity again gives $u_{S = 0, f_0 \hat{n}} = f_0 u_{S = 0, \hat{n}} $. The displacement can then be written as
\begin{equation}
	u = v + f_0 u_{S = 0, \hat{n}}.
\end{equation}
Evaluating at $x = x_0$, taking an inner product with $\hat{n}$, and using the constraint that $f_0 = - \kappa u(x_0) \cdot \hat{n}$ gives
\begin{equation}
	-\frac{f_0}{\kappa} = v(x_0) \cdot \hat{n} + f_0 u_{S = 0, \hat{n}}(x_0) \cdot \hat{n}.
\end{equation}
Rearranging gives
\begin{equation}
	f_0 = \frac{-\kappa v(x_0) \cdot \hat{n} }{\kappa u_{S = 0, \hat{n}}(x_0) \cdot \hat{n} + 1} = \frac{ - \kappa v(x_0) \cdot \hat{n} - \kappa u_{S = 0, \hat{n}}(x_0) \cdot \hat{n} - 1}{ \kappa u_{S = 0, \hat{n}}(x_0) \cdot \hat{n} + 1} - 1
\end{equation}
or
\begin{equation}
	f_0 = \frac{ - \kappa u_{S = 1, \hat{n}}(x_0) \cdot \hat{n} - 1}{\kappa u_{S = 0, \hat{n}}(x_0) \cdot \hat{n} + 1} - 1.
\end{equation}
We recognize $u_{S = 0, \hat{n}}(x_0) \cdot \hat{n}$ and $u_{S = 1, \hat{n}}(x_0) \cdot \hat{n} $ as the unactuated and actuated compliances under loading $f = \delta(x - x_0) \hat{n} $. Thus, the workpiece objective can then be written as a function of compliancies,
\begin{equation}
	\inf_{\Phi \in \mathcal{D}} \mathcal{O}(\Phi) = \frac{\kappa \mathcal{C}_{\hat{n}}(\Phi, 1) + 1}{\kappa \mathcal{C}_{\hat{n}}(\Phi, 0) + 1}
\end{equation}

\end{document}